\newtheorem{theorem}{Theorem}[section]
\newtheorem{lemma}[theorem]{Lemma}
\newtheorem{proposition}[theorem]{Proposition}
\newtheorem{corollary}[theorem]{Corollary}
\theoremstyle{definition}
\newtheorem{definition}[theorem]{Definition}
\newtheorem{example}[theorem]{Example}
\newcommand{\field}{\boldsymbol{k}}
\newcommand{\Prim}{\mathop{\mathrm{Prim}}}
\newcommand{\Z}{\mathbb Z}
\newcommand{\1}{_{(1)}}
\newcommand{\2}{_{(2)}}
\newcommand{\3}{_{(3)}}
\DeclareMathOperator{\ad}{\mathrm{ad}}
\DeclareMathOperator{\BCH}{\mathrm{BCH}}
\DeclareMathOperator{\Endo}{\mathrm{End}}
\DeclareMathOperator{\Id}{\mathrm{Id}}
\DeclareMathOperator{\Sab}{\mathrm{Sab}}
\begin{document}
\title{A Non-associative Baker-Campbell-Hausdorff formula}%

\author{J. Mostovoy}
\address{Departamento de Matem\'aticas, CINVESTAV-IPN, Apartado Postal 14--740, 07000 M\'e\-xico D.F., Mexico}
\email{jacob@math.cinvestav.mx}

\author{J. M. P\'erez-Izquierdo}
\address{Departamento de Matem\'aticas y Computaci\'on, Universidad de
La Rioja, 26004 \\ Lo\-gro\-\~no, Spain}
\email{jm.perez@unirioja.es}
\thanks{The authors acknowledge the support by the Spanish Ministerio de Ciencia e Innovaci\'on (MTM2013-45588-C3-3-P); J.\ Mostovoy was also supported by the CONACYT grant 168093-F; I. P. Shestakov also acknowledges support by FAPESP, processo 2014/09310-5 and CNPq, processos 303916/2014-1 and 456698/2014-0.}%

\author{I. P. Shestakov}
\address{Instituto de Matem\'atica e Estat{\'\i}stica, Universidade de S\~ao Paulo, Caixa Postal 66281, S\~ao Paulo, SP 05311-970, Brazil}
\email{shestak@ime.usp.br}

\keywords{Baker-Campbell-Hausdorff formula, primitive elements, Sabinin algebras, Magnus expansion}
\subjclass[2010]{17A50,20N05}

%
\begin{abstract}
We address the problem of constructing the non-associative version of the Dynkin form of the Baker-Campbell-Hausdorff formula; that is, expressing $\log (\exp (x)\exp(y))$, where $x$ and $y$ are non-associative variables, in terms of the Shestakov-Umirbaev primitive operations. In particular, we obtain a recursive expression for the Magnus expansion of the  Baker-Campbell-Hausdorff series and an explicit formula in degrees smaller than 5. Our main tool is a non-associative version of the Dynkin-Specht-Wever Lemma. A construction of Bernouilli numbers in terms of binary trees is also recovered.
\end{abstract}
\maketitle
%
%
%
%
%
%
%
\section{Introduction}
The Baker-Campbell-Hausdorff formula is the expansion of $\log(\exp(x)\exp(y))$ in terms of nested commutators for the non-commuting variables $x$ and $y$, where the commutator of $a$ and $b$ is defined as $[a,b] := ab - ba$.  The explicit combinatorial form of it was given by Dynkin in his 1947 paper \cite{Dy47}. By considering the linear extension of the map $\gamma$ defined by $\gamma(1) := 0$, $\gamma(x) := x$, $\gamma(y) := y$, $\gamma(ux) := [\gamma(u),x]$ and $\gamma(uy) := [\gamma(u),y]$ he proved that
\begin{multline}\label{eq:Dynkin_formula}
\log(\exp(x)\exp(y)) = \\ \sum_{n=1}^{\infty}\frac{(-1)^{n-1}}{n}\sum_{
	r_i+s_i \geq 1} \frac{(\sum_{j=1}^{n}(r_j+s_j))^{-1}}{r_1!s_1!\cdots r_n!s_n!}\gamma(x^{r_1}y^{s_1}\cdots x^{r_n}y^{s_n}).
\end{multline}
This series is related to Lie's Third Theorem and the history around it is too rich to be retold here, so we refer the reader to the recent monograph \cite{BonFul12} and references therein for a historical account.

The Baker-Campbell-Hausdorff formula, as well as many other results in Lie theory, firmly belongs to associative algebra. However, after the work of Mikheev and Sabinin on local analytic loops \cite{SaMi87} and the description of the primitive operations in non-associative algebras by Shestakov and Umirbaev \cite{ShUm02}, associativity does not seem to be as essential for the Lie theory as previously thought \cite{MPS14}.

In this paper we address the problem of determining $\BCH_l(x,y)$ in
\begin{displaymath}
	\exp_l(x)\exp_l(y) = \exp_l(\BCH_l(x,y)),
\end{displaymath}
where 
\begin{displaymath}
		\exp_l(x) = \sum_{n\geq 0} \frac{1}{n!}\underbrace{(((xx)\cdots )x)x}_{n},
\end{displaymath}
in terms of Shestakov-Umirbaev operations for the primitive elements of the non-associative algebra freely generated by $x$ and $y$. Our approach uses a generalization of the Magnus expansion (see \cite{BlaCaOtRo09} for a readable survey), that is, we will study the differential equation $$X'(t) = X(t)A(t),$$ where $X(t)$ stands for $\exp_l(\Omega(t))$ and both $A(t)$ and $\Omega(t)$ belong to a non-associative algebra. The differential equation $$\Omega'(t)= A(t) + \sum_{J} n_J P_J(\Omega(t);A(t))$$ satisfied by $\Omega(t)$ (Corollary~\ref{cor:Magnus}) is obtained with the help of a non-associative version of the Dynkin-Specht-Wever Lemma (Lemma~\ref{lem:DSW}). This equation leads to a recursive formula for computing the expansion of $\BCH_l(x,y)$, which gives, in degrees smaller than 5, the following expression:
\begin{align*}
		\BCH_l(x,y) &=  x+y + \frac{1}{2}[x,y]\\
						& \quad +\frac{1}{12}[x,[x,y]] - \frac{1}{3}\langle x;x,y\rangle 
						-\frac{1}{12} [y,[x,y]] - \frac{1}{6} \langle y; x,y \rangle -\frac{1}{2}\Phi(x;y,y) \\
						&\quad -\frac{1}{24}\langle x; x, [x,y]\rangle - \frac{1}{12} [x,\langle x;x,y\rangle] - \frac{1}{8}\langle x,x;x,y\rangle \\
						&\quad +\frac{1}{24}[[x,[x,y]],y] - \frac{1}{24}[x,\langle y;x,y\rangle] -\frac{1}{4}\Phi(x,x;y,y) - \frac{1}{4}[x,\Phi(x;y,y)] \\
						&\quad -\frac{1}{24}[\langle x;x,y\rangle,y] -\frac{1}{24}\langle x; [x,y],y\rangle - \frac{1}{6}\langle x,y;x,y\rangle + \frac{1}{24}\langle y,x;x,y\rangle \\
						& \quad + \frac{1}{12} [\Phi(x;y,y),y] + \frac{1}{24}\langle y; y,[x,y]\rangle - \frac{1}{24} \langle y,y;x,y\rangle - \frac{1}{6}\Phi(x;y,y,y) \\
						& \quad + \dots
\end{align*}
When all the operations apart from $[\, ,\,]$ vanish we recover the usual Baker-Campbell-Hausdorff formula. A different approach to the non-associative Baker-Campbell-Hausdorff formula has appeared in \cite{GeHo03}; it does not explicitly use the Dynkin-Specht-Wever lemma or the Magnus expansion. For the treatment of the subject from the point of view of differential geometry see \cite{GW}; actually, geometric considerations also motivate a different type of a Baker-Campbell-Hausdorff formula, see \cite{MPS14b}; although it is of importance for the non-associative Lie theory, we shall not consider it here.

Our results are presented for the unital $\field$-algebra of  formal power series $\field\{\{x,y\}\}$ in two non-associative variables $x$ and $y$. Readers with background in non-associa\-tive structures will realize that a more natural context for the Baker-Campbell-Hausdorff formula is the completion of the universal enveloping algebra of a relatively free Sabinin algebra on two generators. The extension of our results to that context is rather straightforward.

Readers familiar with free Lie algebras might wonder about the existence, behind of the scenes, of certain non-associative Lie idempotents responsible for some of our formulas. The answer is affirmative; however, this topic is not discussed in the present paper since it requires some knowledge of Sabinin algebras and treating it would significantly increase the length of text. Very briefly, the context for the non-associative Lie idempotents is as follows.

One can start with a variety $\Omega$ of loops containing all the abelian groups and define a relatively free Sabinin algebra $\Sab_\Omega(X)$ associated to the variety $\Omega$ and  freely generated by $X:=\{ x_1,x_2,\dots\}$. Let $U(\Sab_\Omega(X))$ be the universal enveloping algebra of $\Sab_\Omega(X)$. This algebra is a  non-associative graded Hopf algebra $$U(\Sab_\Omega(X)) = \bigoplus_{n=0}^\infty U_n$$ once we set $\vert x_i \vert := i$. The convolution $$(f * g)(x) = \sum f(x\1) g(x\2)$$ defines a non-associative product on the space of $\field$-linear maps $\Endo(U(\Sab_\Omega(X)))$. The subalgebra generated by the projections $\Id_n$ of $U(\Sab_\Omega(X))$ onto $U_n$ (where $n = 0, 1,\dots$) with respect to this convolution product, which in the associative setting is anti-isomorphic to Solomon's descent algebra, is isomorphic as a graded Hopf algebra to $U(\Sab_\Omega(X))$. Therefore, $U(\Sab_\Omega(X))$ is a non-associative Hopf algebra with an extra associative (inner) product inherited from the composition in $\Endo(U(\Sab_\Omega(X)))$. In the associative case, this is the subject of study in the theory of non-commutative symmetric functions, so a similar theory seems possible in the non-associative setting. Eulerian, Dynkin and Klyachko idempotents, among others, are easily understood in $U(\Sab_\Omega(X))$ as particular examples of primitive elements with respect to the comultiplication that, in addition, are idempotent with respect to the associative inner product, and they ultimately explain some of the formulas in this paper.

%
\subsection{Notation.} 
Throughout this paper the characteristic of the base field $\field$ is zero. The unital associative $\field$-algebra freely generated by a set of generators $X$ will be denoted by $\field \langle X \rangle$ while $\field \langle\langle X \rangle\rangle$ will stand for the unital  associative algebra of formal power series on $X$ with coefficients in $\field$. Their non-associative counterparts, namely, the unital non-associative $\field$-algebra freely generated by $X$ and the unital non-associative $\field$-algebra of  formal power series on $X$ with coefficients in $\field$, will be denoted by $\field \{ X \}$ and $\field\{\{X\}\}$ respectively. For any algebra $H$, $H[[t]]$ will denote the algebra of formal power series in $t$ with coefficients in $H$. The parameter $t$ commutes and associates with all the elements in $H[[t]]$. Finally, we will stick to the following order of parentheses for powers: $x^n := (((xx)\cdots )x)x$ ($n$ times).

%
\section{Fundamentals}

\subsection{Non-associative Hopf algebras.}
A coalgebra $(C, \Delta, \epsilon)$ is a vector space equipped with two linear maps  $\Delta \colon C \rightarrow C \otimes C$ (\emph{comultiplicaton}) and   $\epsilon \colon C \rightarrow \field$ (\emph{counit}) such that
\begin{displaymath}
	\sum \epsilon(x\1) x\2 = x = \sum \epsilon(x\2)x\1,
\end{displaymath}
where $\sum x\1 \otimes x\2$ stands for $\Delta(x)$ (\emph{Sweedler notation}). 
Coassociative and cocommutative coalgebras are those coalgebras $(C, \Delta, \epsilon)$ that, in addition, satisfy $$(\Delta \otimes \Id) \Delta = (\Id \otimes \Delta) \Delta$$ (\emph{coassociativity}) and $\tau \Delta = \Delta$ (\emph{cocommutativity}) where $\tau (x \otimes y) = y \otimes x$.
Coassociativity ensures that $$\sum {x\1}_{(1)} \otimes {x\1}_{(2)} \otimes x\2 = \sum x\1 \otimes {x\2}_{(1)} \otimes {x\2}_{(2)}$$ so we can safely write $\sum x\1 \otimes x\2 \otimes x\3$ for any of the sides of this equality. 
For coassociative coalgebras the result of the iterated application $n$ times of $\Delta$  to $x$ does not depend on the selected factors and it is denoted by $\sum x\1 \otimes x\2 \otimes \cdots \otimes x_{(n+1)}$. 
Cocommutativity ensures that we can freely permute the factors of $\sum x\1 \otimes x\2 \otimes \cdots \otimes x_{(n+1)}$ without altering the value of this expression \cite{Sw69}.

In this paper, by a \emph{(non-associative) Hopf algebra} $(H, m, u, \backslash, /, \Delta, \epsilon)$ we shall mean a cocommutative and coassociative coalgebra $(H, \Delta, \epsilon)$ endowed with the following linear maps: a \emph{product} $m \colon H \otimes H \rightarrow H$, a \emph{unit} $u \colon \field \rightarrow H$, a \emph{left division} $\backslash \colon H \otimes H \rightarrow H$ and a \emph{right division} $/\colon H \otimes H \rightarrow H$ so that $\Delta(xy) = \Delta(x) \Delta(y)$, $\Delta(1) = 1 \otimes 1$, $\epsilon(xy) = \epsilon(x) \epsilon(y)$,  $\epsilon(1) = 1$ and
\begin{align}
 	\sum x\1 \backslash (x\2 y) &= \epsilon(x) y = \sum x\1 (x\2 \backslash y) \label{eq:left_division}\\
 	\sum (y x\1) / x\2 &= \epsilon(x) y = \sum (y / x\1) x\2 \label{eq:right_division}
\end{align}
where $xy := m(x \otimes y)$ and $1 := u(1)$ is the unit element (see \cite{MPS14} for a survey on non-associative Hopf algebras). 
In case that $H$ is associative then the left and right divisions can be written as $x \backslash y = S(x) y$ and $x/y = xS(y)$ where $S$ is the antipode. However, non-associative Hopf algebras lack antipodes in general. 
 
\subsection{The free unital non-associative algebra $\field\{X\}$.} 
The most important example of a non-associative Hopf algebra in this paper is the unital non-associative algebra $\field\{X\}$ freely generated by $X:= \{x_1,x_2,\dots \}$. 
The maps $\Delta \colon x_i \mapsto x_i \otimes 1 + 1 \otimes x_i$ and $\epsilon \colon x_i \mapsto 0$ ($i = 1,2,\dots$) induce homomorphisms of unital algebras $\Delta \colon \field\{X\} \rightarrow \field\{X\} \otimes \field\{X\}$ and $\epsilon \colon \field\{X\} \rightarrow \field$ so that $(\field\{X\}, \Delta, \epsilon)$ is a coassociative and cocommutative coalgebra. 
By induction on the degree of $x$,  the formulas (\ref{eq:left_division}) and (\ref{eq:right_division}) uniquely determine the left and the right division in $\field\{X\}$. 
For instance, $1 \backslash (1 y) = \epsilon(1) y$ implies $1\backslash y = y$ and
\begin{displaymath}
	x_i \backslash (1y) + 1 \backslash (x_i y) = \epsilon(x_i) y = 0 \quad \text{implies}\quad  x_i \backslash y = -x_i y
\end{displaymath}
etc. 
The operations $\Delta$, $\epsilon$, $\backslash$ and $/$, together with the product and the unit, provide $\field\{X\}$ with the  structure of a non-associative Hopf algebra. 
Far from being a fancy feature, the divisions are a valuable tool for computations.
%
\subsection{Primitive elements of $\field\{X\}$ and the Shestakov-Umirbaev operations.}
An element $a$ in a Hopf algebra $H$ such that 
\begin{displaymath}
	\Delta(a) = a \otimes 1 + 1 \otimes a
\end{displaymath}
is called \emph{primitive}; the subspace of all such elements is denoted by $\Prim(H)$. 
While for associative Hopf algebras this subspace is a Lie algebra with the commutator product $[x,y]:= xy - yx$, Shestakov and Umirbaev \cite{ShUm02} realized that if $H$ is non-associative, many more operations are required to  describe its algebraic structure completely. 

Let $X:= \{x, x_1,x_2,\dots\}$, $Y:= \{y, y_1,y_2,\dots\}$ and $Z:=\{z\}$ be disjoint sets of symbols that we take to be the free generators of $\field\{X \cup Y \cup Z\}$. 
Write $\underline{x} := ((x_1x_2)\cdots)x_m$, $\underline{y}:= ((y_1y_1)\cdots) y_n$ and define 
\begin{equation}\label{eq:p}
	p(x_1,\dots, x_m; y_1,\dots,y_n;z) := p(\underline{x},\underline{y},z) := \sum (\underline{x}\1 \underline{y}\1) \backslash (\underline{x}\2, \underline{y}\2,z)
\end{equation}
in $\field\{X \cup Y \cup Z\}$, where $(x,y,z)$ denotes the associator $(xy)z - x(yz)$ of $x,y$ and $z$. Each of the elements $p(\underline{x},\underline{y},z)$ is primitive.
Considered as non-associative polynomials, $p(x_1,\dots, x_m;y_1,\dots,y_n;z)$ can be evaluated in any algebra $A$ so we can think of them as of new multilinear operations derived from the binary product of $A$. Define
\begin{align*}
	[x,y] &:= xy - yx \\
	\langle x_1,\dots,x_m; y,z \rangle &:= -p(x_1,\dots, x_m;y;z) + p(x_1,\dots,x_m; z;y)\\
	\Phi(x_1,\dots, x_m; y_1,\dots, y_n; y_{n+1}) &:= \\
	& \hskip -2.5cm \frac{1}{m!(n+1)!} \sum_{\sigma \in S_n, \tau \in S_{m+1}} p(x_{\sigma(1)},\dots, x_{\sigma(m)}; y_{\tau(1)},\dots, y_{\tau(n)}; y_{\tau(n+1)})
\end{align*}
where $m, n \geq 1$ and $S_k$ stands for the symmetric group on $\{1,\dots, k\}$.
In order to simplify the notation, for $m = 0$ we write $$\langle y,z\rangle := \langle x_1,\dots,x_m; y,z \rangle:= \langle 1 ; y,z \rangle:= -[y,z].$$
With this convention, (\ref{eq:p}) gives
\begin{equation}\label{eq:brackets}
	(\underline{x}y)z - (\underline{x}z)y = - \sum \underline{x}_{(1)} \langle \underline{x}_{(2)}; y,z \rangle.
\end{equation}
Shestakov and Umirbaev proved that
\begin{displaymath}
	(\Prim(\field\{X\}),\langle \,\, ; \, , \, \rangle,\Phi(\,;\,;\,)) \text{ is generated by } X.
\end{displaymath}
Thus, while (\ref{eq:Dynkin_formula}) can be written in terms of commutators, the natural language to write its non-associative counterpart uses $\langle \,\, ; \, , \, \rangle$ and $\Phi(\,;\,;\,)$. 
\subsection{Exponentials, logarithms and the Baker-Campbell-Hausdorff formula.}

The algebra $\field\{\{ x\}\}$ (respectively, $\field\langle\langle x \rangle\rangle$) of formal power series in $x$ with coefficients in $\field$ is a topological Hopf algebra with the continuous extension of the operations of the Hopf algebra $\field\{x\}$ (respectively,  $\field\langle x \rangle$).
Since $\Prim(\field \langle x \rangle) = \field x$, the \emph{group-like} elements of  $\field\langle\langle x \rangle\rangle$, that is, the elements $g$ such that $\Delta(g) = g \otimes g$ and $\epsilon(g) = 1$, are of the form $\exp(\alpha x)$ with $\alpha \in \field$. 
Therefore, $\exp(x)$ is, in a sense, canonical among all of them. 
However, $\Prim(\field\{\{x\}\})$ is infinite-dimensional and $\field\{\{x\}\}$ has an infinite number of group-like elements that could rightfully be considered as the non-associative analogs of the exponential series. 
Apart from the most obvious non-associative versions of the exponential
\begin{displaymath}
	\exp_l(x) := \sum_{n\geq 0} \frac{1}{n!}\underbrace{(((xx)\cdots )x)x}_{n} \quad\text{and}\quad\exp_r(x) := \sum_{n\geq 0} \frac{1}{n!}\underbrace{x(x(\cdots(xx)))}_{n}
\end{displaymath}
other series have been proposed as non-associative analogs of  $\exp(x) := \sum_{n=0}^\infty {x^n}/{n!}$, each leading to a different logarithm \cite{Ge04a}.

\begin{definition}
	A group-like element $e(x) \in \field\{\{x\}\}$ is a \emph{base for logarithms} if its homogeneous component $e_1(x)$ of degree one in $x$ is not zero. We say that the base for logarithms $e(x)$ is \emph{normalized} if $e_1(x) = x$. 
\end{definition}
Associated with any base for logarithms $e(x)$ there exists a primitive element $\log_e(x) \in \field\{\{x\}\}$ determined by
\begin{displaymath}
	e(\log_e(x)) = x = \log_e(e(x)).
\end{displaymath} 
The \emph{exponentiation} on $\field\{\{X\}\}$ \emph{with base} $e(x)$ and the \emph{logarithm on $ \field\{\{X\}\}$ to the base $e(x)$} are the maps 
\begin{align*}
	e\colon \field\{\{X\}\}_+ &\rightarrow 1+\field\{\{X\}\}_+ & \log_{e} \colon 1 +  \field\{\{X\}\}_+ &\rightarrow  \field\{\{X\}\}_+\\
u&\mapsto e(u) & 1+ u &\mapsto \log_e(1+ u)
\end{align*} 
where $ \field\{\{X\}\}_+$ denotes the space of formal power series with zero constant term. Both maps are inverse to each other and give a bijection between the primitive and the group-like elements in $ \field\{\{X\}\}$. 
The logarithms to the bases $\exp_l(x)$ and $\exp_r(x)$ will be denoted by $\log_l$ and $\log_r$, respectively.

Any base for logarithms $e(x)$ determines a Baker-Campbell-Hausdorff series in $\field \{\{x,y\}\}$:
\begin{displaymath}
	\BCH_e(x,y):= \log_e(e(x)e(y)).
\end{displaymath}
The element $\BCH_e(x,y)$ is primitive so it can be written in terms of the Shestakov-Umirbaev operations $\langle \,\, ; \, , \, \rangle$ and $\Phi(\,;\,)$. 
Since these operations are defined via the left-normed products $\underline{x}$ and $\underline{y}$, the base $\exp_l(x)$ is better adapted to recursive computations.  In \cite{MP10} $\log_l(1+x)$ has been described as follows. For $\tau=x$ set $B_\tau :=\tau! :=1$. If $\tau\neq x$ is a non-associative monomial in $x$, there is only one way of writing $\tau$ as a product $(\ldots((x
\tau_1)\tau_2)\ldots)\tau_k$. Set $B_{\tau}:=B_k B_{\tau_1}\ldots B_{\tau_k}$ and $\tau! := k! \tau_1 !\ldots \tau_k !$ where $B_k$ is the $k$th Bernoulli number. With this notation we have
\begin{displaymath}
	\log_l(1+x)=\sum_{\tau} \frac{B_\tau}{\tau!} \tau \in \field\{\{x\}\}.
\end{displaymath}
The Baker-Campbell-Hausdorff series for different bases are related in a straightforward manner. If $e$ and $f$ are two bases for logarithms, the series $h(x):=\log_{f}(e(x))$ is a primitive element of $\field\{\{x\}\}$ whose term of degree 1 is non-zero. In particular, it has a composition inverse $h^{-1}(x)=\log_e(f(x))$ such that $h^{-1}(h(x))=x$. It is then clear that
$$\BCH_e(x,y)=h^{-1}(\BCH_f(h(x),h(y))).$$
Moreover, for any Baker-Campbell-Hausdorff series $\BCH(x,y)$ and any primitive $h\in\field\{\{x\}\}$ with $h_1\neq 0$, the series $h^{-1}(\BCH(h(x),h(y)))$ is also a Baker-Campbell-Hausdorff series for some base.

%
\section{A Nonassociative Baker-Campbell-Hausdorff formula}
\subsection{A non-associative Dynkin-Specht-Wever Lemma.} 
Let $d$ be a derivation of $\field\{X\}$ that preserves $\Prim(\field\{X\})$, that is
\begin{displaymath}
	d(\Prim(\field\{X\})) \subseteq \Prim(\field\{X\}).
\end{displaymath}
Define $\gamma_d(u) := \sum u_{(1)} \backslash d(u_{(2)})$; thus,
\begin{displaymath}
	d(u) = \sum u_{(1)} \gamma_d(u_{(2)})
\end{displaymath}
for all $u \in \field\{X\}$. The proof of the following result was inspired by \cite{Wi89}.

\begin{lemma}[The Dynkin-Specht-Wever Lemma]
\label{lem:DSW}
Let $d$ be a derivation of $\field\{X\}$ that preserves  $\Prim(\field\{X\})$, $u \in \field\{X\}$ and  $a\in \Prim(\field\{X\})$. We have
\begin{displaymath}
	\gamma_d(ua) = \epsilon(u)d(a) + \sum \langle u_{(1)};a, \gamma_d(u_{(2)})\rangle.
\end{displaymath}
\end{lemma}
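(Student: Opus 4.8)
The plan is to expand $\gamma_d(ua)$ from its definition, push the derivation and the coproduct through, and then reorganize the resulting division terms into Shestakov--Umirbaev brackets by means of (\ref{eq:brackets}). First I would compute the coproduct of $ua$: since $\Delta$ is an algebra homomorphism and $a$ is primitive, $\Delta(ua) = \sum u_{(1)}a\otimes u_{(2)} + \sum u_{(1)}\otimes u_{(2)}a$. Substituting this into $\gamma_d(ua) = \sum (ua)_{(1)}\backslash d((ua)_{(2)})$ and using that $d$ is a derivation, $d(u_{(2)}a) = d(u_{(2)})a + u_{(2)}d(a)$, splits $\gamma_d(ua)$ into three pieces. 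The piece $\sum u_{(1)}\backslash(u_{(2)}d(a))$ collapses to $\epsilon(u)d(a)$ by the left-division identity (\ref{eq:left_division}) applied with the fixed element $d(a)$, giving the first term of the claim. There remain
\[
T_1 := \sum (u_{(1)}a)\backslash d(u_{(2)}), \qquad T_2 := \sum u_{(1)}\backslash(d(u_{(2)})a),
\]
and it suffices to identify $T_1+T_2$ with $\sum \langle u_{(1)};a,\gamma_d(u_{(2)})\rangle$. Replacing $d(u_{(2)}) = \sum u_{(2)}\gamma_d(u_{(3)})$ via the defining relation $d(w) = \sum w_{(1)}\gamma_d(w_{(2)})$ (threefold coproduct) gives $T_1 = \sum (u_{(1)}a)\backslash(u_{(2)}\gamma_d(u_{(3)}))$ and $T_2 = \sum u_{(1)}\backslash((u_{(2)}\gamma_d(u_{(3)}))a)$.

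The key step is a cancellation coming from the left-division identity applied to the \emph{non-primitive} element $u_{(1)}a$. Grouping the first two coproduct legs into a single element $w$ with $\Delta(w)=u_{(1)}\otimes u_{(2)}$ and holding the last leg $u_{(3)}$ fixed, identity (\ref{eq:left_division}) for $wa$ reads $\sum (w_{(1)}a)\backslash(w_{(2)}\xi) + \sum w_{(1)}\backslash((w_{(2)}a)\xi) = \epsilon(wa)\xi = 0$, the last equality because $\epsilon(a)=0$. Summing this over the last leg with $\xi = \gamma_d(u_{(3)})$ shows $T_1 = -T_1'$, where $T_1' := \sum u_{(1)}\backslash((u_{(2)}a)\gamma_d(u_{(3)}))$. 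Using linearity of $\backslash$ in its second argument, I then obtain
\[
T_1+T_2 = \sum u_{(1)}\backslash\big[(u_{(2)}\gamma_d(u_{(3)}))a - (u_{(2)}a)\gamma_d(u_{(3)})\big].
\]

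Finally I would recognize this right-hand side as the desired bracket sum by inverting (\ref{eq:brackets}). Read as $\sum w_{(1)}\langle w_{(2)};y,z\rangle = (wz)y-(wy)z$ for fixed $y,z$, an application of $\sum w_{(1)}\backslash(\,\cdot\,)$ together with (\ref{eq:left_division}) yields $\langle w;y,z\rangle = \sum w_{(1)}\backslash[(w_{(2)}z)y - (w_{(2)}y)z]$. Applying this fiberwise over the last coproduct leg, with $\Delta(w)=u_{(1)}\otimes u_{(2)}$, $y=a$ and $z=\gamma_d(u_{(3)})$, collapses the display above exactly to $\sum\langle u_{(1)};a,\gamma_d(u_{(2)})\rangle$, which completes the proof.

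I expect the main obstacle to be the careful bookkeeping of Sweedler indices around the coproduct-dependent entry $\gamma_d(u_{(3)})$: both the cancellation $T_1=-T_1'$ and the final recognition of the bracket must be carried out \emph{fiberwise} in the last coproduct leg, regrouping legs by coassociativity, because (\ref{eq:left_division}) and (\ref{eq:brackets}) are directly available only for entries that do not themselves depend on the coproduct being summed. The conceptual crux, and the reason the formula takes precisely this shape, is that the single cancellation $T_1=-T_1'$ is nothing but the left-division axiom for $ua$ combined with the primitivity of $a$.
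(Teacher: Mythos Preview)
Your argument is correct. It differs from the paper's proof mainly in the order of operations. The paper does not start from the definition $\gamma_d(ua)=\sum (ua)\1\backslash d((ua)\2)$; instead it computes $d(ua)$ in two ways, once as $\sum (ua)\1\gamma_d((ua)\2)=\sum u\1\gamma_d(u\2 a)+\sum(u\1 a)\gamma_d(u\2)$ and once as $d(u)a+u\,d(a)=\sum(u\1\gamma_d(u\2))a+u\,d(a)$. Equating these and applying~(\ref{eq:brackets}) directly to $(u\1\gamma_d(u\2))a-(u\1 a)\gamma_d(u\2)$ gives an identity of the form $\sum u\1\,\gamma_d(u\2 a)=\sum u\1\langle u\2;a,\gamma_d(u\3)\rangle+\sum u\1\epsilon(u\2)d(a)$, and a single left division by $u\1$ via~(\ref{eq:left_division}) finishes. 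Because the paper stays in ``multiplied'' form until the very end, it never meets your term $T_1=\sum(u\1 a)\backslash d(u\2)$ and hence avoids the extra cancellation $T_1=-T_1'$ that you obtain from~(\ref{eq:left_division}) applied to $ua$. Your route is thus a step longer but perfectly valid; the fiberwise Sweedler bookkeeping you flag as the delicate point is exactly the same issue the paper handles implicitly when it invokes~(\ref{eq:brackets}) with the coproduct-dependent entry $\gamma_d(u\3)$.
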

\begin{proof}
Let us compute $d(ua)$ in two ways:
\begin{displaymath}
	d(ua) = \left\{ \begin{array}{l}\sum u_{(1)}\gamma_d(u_{(2)}a) + \sum (u_{(1)}a)\gamma_d(u_{(2)})\\ \\ d(u)a + ud(a) = \sum (u_{(1)} \gamma_d(u_{(2)}))a + ud(a)\end{array}\right.
\end{displaymath}
so that by (\ref{eq:brackets}) $$\sum u_{(1)}\gamma_d(u_{(2)}a) = \sum u_{(1)} \langle u_{(2)}; a, \gamma_d(u_{(3)})\rangle + \sum u_{(1)} \epsilon(u_{(2)}) d(a).$$
Using (\ref{eq:left_division}), divide by $u\1$ to get the result.
\end{proof}

\begin{example}
Let us compute the expansion $\log_l(\exp_l(x)\exp_l(y))$ up to degree $3$ in terms of the Shestakov-Umirbaev operations with the help of the Dynkin-Specht-Wever Lemma. Since, up to the summands of degree $\geq 5$, we have
\begin{align*}
	\log_{l}(1+x) &= \\
	& \hskip -1cm  x- \frac{1}{2}x^2 + \frac{1}{12} x^2 x + \frac{1}{4}xx^2 - \frac{1}{24}x(x^2x) -\frac{1}{8}x(xx^2) -\frac{1}{24}x^2 x^2 - \frac{1}{24}(xx^2)x + \cdots,
\end{align*}
the expansion of $\log_{l}(\exp_l(x)\exp_l(y))$ up to degree $3$ is
\begin{multline} \label{eq:BCH3}
	x+y + \frac{1}{2}[x,y]
	+ \frac{1}{3}x^2y  - \frac{1}{4}x(xy)+\frac{1}{4} x(yx) -\frac{5}{12} (xy)x + \frac{1}{12}(yx)x\\
	+ \frac{1}{2} xy^2 -\frac{5}{12}(xy)y +\frac{1}{12}(yx)y - \frac{1}{4}y(xy) - \frac{1}{6}y^2x +\frac{1}{4}y(yx)  + \cdots
\end{multline}
Now, apply Lemma \ref{lem:DSW} with $d(u) := \vert u\vert u$, where $\vert u \vert$ denotes the degree of $u$, for homogeneous $u \in \field\{x,y\}$. 
First, observe that $\gamma_d(ab) = \langle b,a\rangle$, $$\gamma_d((ab)c) = \langle c, \langle b,a\rangle\rangle + \langle a; c,b\rangle + \langle b; c,a\rangle$$ and  $$\gamma_d(a(bc)) = \gamma_d((ab)c - (a,b,c)) = \langle c, \langle b,a\rangle\rangle + \langle a; c,b\rangle + \langle b; c,a\rangle - 3(a,b,c).$$
Applying $\gamma_d$ to the homogeneous summands in (\ref{eq:BCH3}) and dividing by their degree, we can write (\ref{eq:BCH3}) as
\begin{displaymath}
	x+y + \frac{1}{2}[x,y]
	+\frac{1}{12}[x,[x,y]] - \frac{1}{3}\langle x;x,y\rangle 
	-\frac{1}{12} [y,[x,y]] + \frac{1}{6} \langle y; y,x \rangle -\frac{1}{2}\Phi(x;y,y) +\cdots
\end{displaymath}
\qed
\end{example}
\subsection{A non-associative Magnus expansion.} 
The differential equation 
\begin{displaymath}
	X'(t) = A(t)X(t)
\end{displaymath} 
when $X(t)$ and $A(t)$ do not necessarily commute (for instance, $X(t)$ may belong to a matrix Lie group and $A(t)$ to the corresponding Lie algebra) has been studied since long ago \cite{BlaCaOtRo09}. 
A fruitful approach is to look for solutions of the form $X(t) = \exp(\Omega(t))$ for some $\Omega(t)$, where $\exp(x)$ denotes the usual exponential. 
The solution $\Omega(t)$ is determined by the initial condition and by the differential equation 
\begin{equation}\label{eq:Magnus_recursion}
	\Omega'(t) = \frac{\ad_{\Omega(t)}}{\exp(\ad_{\Omega(t)})-\Id}(A(t)) = \sum_{n=0}^{\infty} \frac{B_n}{n!}\ad_{\Omega(t)}^n(A(t)),
\end{equation}
where $B_n$ denotes the $n$-th Bernoulli number. Take $X(t) := \exp(tx)\exp(y)$; then  
\begin{displaymath}
	 X'(t) = (x\exp(tx))\exp(y) \stackrel{<1>}{=} x(\exp(tx)\exp(y)) =  xX(t)
\end{displaymath}
so we can  use (\ref{eq:Magnus_recursion}) in order to study $\Omega(t) = \log(\exp(tx)\exp(y))$.
However, in a non-associative setting there are some details to be taken care of, since, for instance, equality $<$1$>$ above requires the associativity. 
\begin{proposition}
Let $H$ be a unital algebra, $e(x) \in \field\{\{x\}\}$ a base for logarithms and $X(t) := e(\Omega(t))$ with $\Omega(t) \in H[[t]]$ such that $\Omega(0) = 0$. For any $A(t) \in H[[t]]$ the solution $\Omega(t)$ to the equation
\begin{displaymath}
	X'(t) = X(t)A(t) 
\end{displaymath}
satisfies
\begin{displaymath}
	\Omega'(t)=(\tau^{e(\Omega(t))})^{-1}(A(t))
\end{displaymath}
where $\tau^{e(x)}$ is defined by
\begin{displaymath}
	\tau^{e(x)}(y) := e(x) \left\backslash \left.\frac{d}{ds}\right\vert_{s=0} e(x+sy) \right.\in   \field\{\{x,y\}\}.
\end{displaymath}
\end{proposition}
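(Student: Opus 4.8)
The plan is to prove an identity between formal power series in the free Hopf algebra $\field\{\{x,y\}\}$, where the divisions $\backslash$ genuinely make sense, and only then substitute $x\mapsto\Omega(t)$ and $y\mapsto\Omega'(t)$; this sidesteps the fact that the general unital algebra $H$ carries no division of its own. First I would differentiate $X(t)=e(\Omega(t))$ by the Leibniz rule. Writing $e(x)=\sum_w \lambda_w\,w$ as a sum of non-associative monomials $w$ in the single variable $x$ and replacing each occurrence of $x$ by $\Omega(t)$, the $t$-derivative of $w(\Omega(t))$ distributes $\Omega'(t)$ over the occurrences of $\Omega(t)$, which is exactly the directional derivative of $e$ in the direction $\Omega'(t)$. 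Hence
\begin{displaymath}
	X'(t) = \left.\frac{d}{ds}\right|_{s=0} e\bigl(\Omega(t) + s\,\Omega'(t)\bigr).
\end{displaymath}
Because $\Omega(0)=0$ we have $\Omega(t)\in tH[[t]]$, so the degree-$n$ part of $e$ contributes a multiple of $t^n$; every series converges $t$-adically and differentiation commutes with the summation since each coefficient of a power of $t$ is a finite sum.

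The crucial structural input is that a base for logarithms is, by definition, a group-like element, and $x$ is primitive in $\field\{\{x,y\}\}$, so $e(x)$ is group-like there. For a group-like $g$ the left-division axiom (\ref{eq:left_division}) collapses to $g\,(g\backslash z)=z$ (the Sweedler sum has a single term and $\epsilon(g)=1$). Taking $g=e(x)$ and $z=\frac{d}{ds}|_{s=0}e(x+sy)$ and recalling the definition of $\tau^{e(x)}$ gives the identity
\begin{displaymath}
	e(x)\cdot \tau^{e(x)}(y) = \left.\frac{d}{ds}\right|_{s=0} e(x+sy)
\end{displaymath}
in $\field\{\{x,y\}\}$. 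Note that $\tau^{e(x)}$ is linear in $y$ with coefficients that are power series in $x$; in particular $\tau^{e(0)}(y)=1\backslash e_1(y)=\lambda y$, where $e_1(x)=\lambda x$ with $\lambda\neq 0$ since $e$ is a base for logarithms, so $\tau^{e(\Omega(t))}=\lambda\,\Id + O(t)$ is invertible on $H[[t]]$.

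Finally I would substitute $x\mapsto\Omega(t)$, $y\mapsto\Omega'(t)$ into the last identity; because the relevant series are power series in $x$ of degree at most one in $y$, this substitution is well defined, $t$-adically continuous, and a homomorphism for the non-associative product. Combined with the first step it yields $X'(t)=X(t)\cdot\tau^{e(\Omega(t))}(\Omega'(t))$. Comparing with the hypothesis $X'(t)=X(t)A(t)$ gives $X(t)\bigl(\tau^{e(\Omega(t))}(\Omega'(t))-A(t)\bigr)=0$. Here $X(t)=e(\Omega(t))=1+O(t)$, so left multiplication by $X(t)$ is an invertible operator on $H[[t]]$ and may be cancelled, leaving $\tau^{e(\Omega(t))}(\Omega'(t))=A(t)$; applying $(\tau^{e(\Omega(t))})^{-1}$ finishes the proof.

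I expect the main obstacle to be the first step: making the Leibniz/chain rule rigorous over a non-associative, infinite product and checking that the formal directional derivative $\frac{d}{ds}|_{s=0}e(x+sy)$ really computes $\frac{d}{dt}e(\Omega(t))$. The remaining points are bookkeeping, provided one is careful that the divisions are interpreted in the free Hopf algebra rather than in $H$, and that the cancellation of $X(t)$ rests on left-invertibility of $X(t)=1+O(t)$ rather than on a division of $H$, which need not exist.
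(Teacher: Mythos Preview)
Your argument is correct and follows essentially the same route as the paper's proof: both compute $\frac{d}{dt}e(\Omega(t))$ as the directional derivative $\left.\frac{d}{ds}\right|_{s=0}e(\Omega(t)+s\Omega'(t))$, invoke the definition of $\tau^{e(x)}$, and conclude by the invertibility of $\tau^{e(x)}$ near $x=0$. Your version is in fact slightly more careful: the paper writes the intermediate step as $X(t)\backslash X'(t)=A(t)$ in $H[[t]]$, whereas you keep the left division strictly inside $\field\{\{x,y\}\}$ (where it is defined), pass to the multiplicative identity $e(x)\cdot\tau^{e(x)}(y)=\left.\frac{d}{ds}\right|_{s=0}e(x+sy)$, and only then substitute and cancel the left factor $X(t)=1+O(t)$ in $H[[t]]$.
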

\begin{proof}
Evaluating at $x = \Omega(t)$ and $y = \Omega'(t)$ we get 
\begin{align*}
	\tau^{e(\Omega(t))} (\Omega'(t))  &= e(\Omega(t))  \left\backslash \left.\frac{d}{ds}\right\vert_{s=0} e(\Omega(t)+s \Omega'(t))\right. =  e(\Omega(t))\left\backslash \frac{d}{dt}e(\Omega(t))\right. \\ &= X(t)\backslash X'(t) = A(t).
\end{align*}
If $x = 0$ then $\tau^{e(x)}(y) =  \left.\frac{d}{ds}\right\vert_{s=0} e(sy) = \alpha y$ for some $0 \neq \alpha \in \field$ and there exists $(\tau^{e(x)})^{-1}(y) \in \field\{\{x,y\}\}$ such that $(\tau^{e(x)})^{-1}(\tau^{e(x)}(y)) = y$. Therefore $\Omega'(t)=(\tau^{e(\Omega(t))})^{-1}(A(t))$.
\end{proof}

In order to compute $(\tau^{e(x)})^{-1}(y)$ in terms of the Shestakov-Umirbaev operations, we will use the Dynkin-Specht-Wever Lemma.  
Consider the derivation $y\partial_x$ of $\field\{\{x,y\}\}$ determined by 
\begin{equation}\label{eq:derivation ydeltax}
	(y\partial_x)(x) := y \quad \text{and} \quad (y\partial_x)(y) := 0.
\end{equation}
By induction on the degree $\vert u\vert$ of $u$ we can check that 
$$\Delta((y\partial_x)(u)) = \sum (y\partial_x)(u\1) \otimes  u\2 + u\1 \otimes (y\partial_x)(u\2)$$ so that $(y\partial_x)$  preserves $\Prim(\field\{\{x,y\}\})$  and it is related to $\tau^{e(x)}(y)$ via
\begin{displaymath}
	\tau^{e(x)}(y)=  e(x) \left\backslash \left.\frac{d}{ds}\right\vert_{s=0} e(x+sy)\right. = e(x) \backslash (y\partial_x)(e(x))= \gamma_{y\partial_x}(e(x)).
\end{displaymath}
Now, in order to apply the Dynkin-Specht-Wever Lemma recursively $e(x)$ should be a linear combination of left-normed products of primitive elements. 
This is the main reason for restricting ourselves to $\exp_l(x)$.
\begin{lemma}\label{lem:tau}
The component $\tau_n$ of degree $n$ in $x$ of $\tau^{\exp_l(x)}(y)$ is 
\begin{displaymath}
	\sum_{i=1}^n \frac{1}{n+1}\frac{1}{(n-i)!} \langle x^{n-i}; x, \tau_{i-1} \rangle
\end{displaymath}
where $\tau_0:= y$.
\end{lemma}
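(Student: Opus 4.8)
The plan is to exploit the identity $\tau^{\exp_l(x)}(y)=\gamma_{d}(\exp_l(x))$ with $d:=y\partial_x$, established just above the statement, together with the linearity of $\gamma_d$ and the fact that $d$ lowers the degree in $x$ by exactly one. First I would expand $\exp_l(x)=\sum_{m\ge 0}\frac{1}{m!}x^m$ and write $\tau^{\exp_l(x)}(y)=\sum_{m\ge 0}\frac{1}{m!}\gamma_d(x^m)$. Since $d$ replaces a single $x$ by $y$, and since $\Delta$ and the divisions are graded, one checks that $\gamma_d(x^m)$ is homogeneous of degree $m-1$ in $x$ (and degree $1$ in $y$). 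Hence the degree-$n$ component is isolated by the single term $m=n+1$, yielding the clean reduction $\tau_n=\frac{1}{(n+1)!}\gamma_d(x^{n+1})$; in particular $\tau_0=\gamma_d(x)=y$, matching the convention.

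Next I would compute $\Delta(x^n)$ explicitly. Because $x$ is primitive and $\Delta$ is an algebra homomorphism, $\Delta(x^n)=(x\otimes 1+1\otimes x)^n$, the power being left-normed in $\field\{\{x,y\}\}\otimes\field\{\{x,y\}\}$. Writing $x^n=x^{n-1}x$ and using $(x^k\otimes x^{n-k})(x\otimes 1)=x^{k+1}\otimes x^{n-k}$ and $(x^k\otimes x^{n-k})(1\otimes x)=x^{k}\otimes x^{n-k+1}$, an easy induction via Pascal's rule gives $\Delta(x^n)=\sum_{k=0}^{n}\binom{n}{k}\,x^k\otimes x^{n-k}$. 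The crucial observation is that only pure left-normed power tensors occur: right-multiplying a left-normed power by $x$ (or by $1$) again produces a left-normed power, so no associativity defects intervene on either tensor factor.

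With this in hand I would apply the Dynkin-Specht-Wever Lemma~\ref{lem:DSW} to the factorization $x^{n+1}=x^n\cdot x$, taking $u=x^n$ and $a=x$, which is primitive. Since $\epsilon(x^n)=0$ for $n\ge 1$, the lemma yields $\gamma_d(x^{n+1})=\sum_{k=0}^{n}\binom{n}{k}\langle x^k;x,\gamma_d(x^{n-k})\rangle$. The $k=n$ term drops out because $\gamma_d(x^0)=\gamma_d(1)=0$, and for $k\le n-1$ I would substitute $\gamma_d(x^{n-k})=(n-k)!\,\tau_{n-k-1}$ from the first paragraph, using linearity of $\langle\,\cdot\,;\,\cdot\,,\,\cdot\,\rangle$ in its last slot. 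The binomial factor collapses, $\binom{n}{k}(n-k)!=n!/k!$, so $\gamma_d(x^{n+1})=\sum_{k=0}^{n-1}\frac{n!}{k!}\langle x^k;x,\tau_{n-k-1}\rangle$; dividing by $(n+1)!$ and reindexing $i=n-k$ (so $x^k=x^{n-i}$ and $k!=(n-i)!$) produces exactly the claimed formula, with the $k=0$ term supplying the $i=n$, $x^0=1$ summand.

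The routine parts are the binomial bookkeeping and the reindexing. The two places where I would be most careful, and the genuine crux, are the verification that $\Delta(x^n)$ is a sum of pure tensors $x^k\otimes x^{n-k}$ with binomial coefficients despite non-associativity, and the accompanying degree-count that pins down $\tau_n=\frac{1}{(n+1)!}\gamma_d(x^{n+1})$ and guarantees that $\gamma_d(x^{n-k})$ feeds back the lower-order pieces $\tau_{n-k-1}$. I expect the pure-tensor structure of $\Delta(x^n)$ to be the main obstacle, since it is precisely what allows $u=x^n$ to split cleanly in the Dynkin-Specht-Wever Lemma and thereby turns the computation into a solvable recursion.
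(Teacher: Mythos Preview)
Your proposal is correct and follows precisely the route the paper intends: the paper does not spell out a proof of this lemma, but immediately before it establishes $\tau^{\exp_l(x)}(y)=\gamma_{y\partial_x}(\exp_l(x))$ and afterwards, in the proof of Theorem~\ref{thm:degOneExpansion}, unwinds the very same recursion you obtain by applying Lemma~\ref{lem:DSW} to $x^{n+1}=x^n\cdot x$ with the binomial expansion $\Delta(x^n)=\sum_k\binom{n}{k}x^k\otimes x^{n-k}$. Your identification of the crux (that only pure left-normed tensors $x^k\otimes x^{n-k}$ appear in $\Delta(x^n)$, and the accompanying bi-degree count giving $\tau_n=\frac{1}{(n+1)!}\gamma_d(x^{n+1})$) is exactly right, and the remaining bookkeeping is clean.
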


The expansion of $(\tau^{\exp_l(x)})^{-1}(y)$ can be easily obtained from the expansion of $\tau^{\exp_l(x)}(y)$. 
Given a tuple $J = (j_1,\dots, j_s) \in \Z^s$ with $j_1,\dots, j_s \geq 1$ define
\begin{align*}
	P_J(x;y) &:=\langle \underbrace{x,\dots,x}_{j_1 -1};x,\langle  \underbrace{x,\dots,x}_{j_2 -1};x,\langle \dots\langle  \underbrace{x,\dots,x}_{j_s -1};x,y\rangle\rangle\rangle \quad \text{and}\\
	m_J &:= \frac{1}{j_1+\cdots + j_s+1}\frac{1}{(j_1-1)!}\frac{1}{j_2+\cdots+j_s+1}\frac{1}{(j_2-1)!}\cdots \frac{1}{j_s+1}\frac{1}{(j_s-1)!}.\\
\end{align*}
The concatenation $(i_1,\dots, i_r,j_1,\dots,j_s)$ of $(i_1,\dots, i_r)$ and $(j_1,\dots, j_s)$ will be denoted by $(i_1,\dots,i_r)\vert\vert (j_1,\dots, j_s)$. 

\begin{theorem}\label{thm:degOneExpansion}
In $\field\{\{x,y\}\}$ we have
\begin{displaymath}
	(\tau^{\exp_l(x)})^{-1}(y) = y + \sum_{J} n_J P_J(x;y)
\end{displaymath}
where $J$ runs over all possible tuples with entries $\geq 1$ and 
\begin{displaymath}
	n_J := \sum_{J=J_1 \vert\vert \cdots \vert\vert J_l} (-1)^l m_{J_1}\cdots m_{J_l}.
\end{displaymath}
\end{theorem}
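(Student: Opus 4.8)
The plan is to first convert the recursion of Lemma~\ref{lem:tau} into a closed expansion for $\tau^{\exp_l(x)}$ itself, and then invert it as a graded linear operator in $y$. Throughout I regard $\tau^{\exp_l(x)}$ and each $y\mapsto P_J(x;y)$ as $\field$-linear maps in $y$, graded by degree in $x$; note that the degree-$0$ component is $\Id$ since $\tau_0=y$.

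First I would unfold the recursion. Setting $j_1:=n-i+1$ rewrites the degree-$n$ part as
\begin{displaymath}
	\tau_n=\sum_{j_1=1}^{n}\frac{1}{n+1}\frac{1}{(j_1-1)!}\langle x^{j_1-1};x,\tau_{n-j_1}\rangle,
\end{displaymath}
and iterating this identity down to $\tau_0=y$ produces exactly one term for each tuple $J=(j_1,\dots,j_s)$ with $j_1+\cdots+j_s=n$. The nested bracket accumulated along the way is precisely $P_J(x;y)$, while the coefficient is the product of the weights collected at each step. The key observation is that the degree factor appearing at the $k$-th step is $\frac{1}{(j_k+\cdots+j_s)+1}$, so the full product telescopes term-by-term into the definition of $m_J$. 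This gives
\begin{displaymath}
	\tau^{\exp_l(x)}(y)=y+\sum_J m_J\,P_J(x;y),
\end{displaymath}
the sum running over all nonempty tuples $J$ with entries $\ge 1$.

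Next I would record the substitution law
\begin{displaymath}
	P_I\bigl(x;P_J(x;y)\bigr)=P_{I\vert\vert J}(x;y),
\end{displaymath}
immediate from the nested definition of $P$: replacing the innermost $y$ of $P_I$ by $P_J(x;y)$ simply concatenates the two strings of brackets. Thus composition of the operators $y\mapsto P_J(x;y)$ corresponds to concatenation of tuples. Writing $\tau^{\exp_l(x)}=\Id+M$ with $M:=\sum_J m_J P_J$, every $P_J$ with nonempty $J$ strictly raises the degree in $x$, so $M^l$ contributes only in degrees $\ge l$ and the Neumann series converges in $\field\{\{x,y\}\}$. Hence $(\tau^{\exp_l(x)})^{-1}=\sum_{l\ge 0}(-1)^l M^l$; expanding $M^l$ and applying the substitution law turns each summand into $m_{J_1}\cdots m_{J_l}\,P_{J_1\vert\vert\cdots\vert\vert J_l}(x;y)$. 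Collecting all terms that produce a fixed $J$, that is, all factorizations $J=J_1\vert\vert\cdots\vert\vert J_l$ into nonempty pieces, reads off the coefficient $n_J$ and yields the claimed formula.

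The routine but delicate point is the bookkeeping in the first step: one must verify that the degree-dependent factor $\frac{1}{n+1}$ of Lemma~\ref{lem:tau}, which at first sight couples the stages of the recursion, in fact telescopes cleanly into the product defining $m_J$. Everything else is formal --- the substitution law is a one-line identity, and the inversion is the standard geometric-series argument, legitimate here because the grading makes $M$ locally nilpotent.
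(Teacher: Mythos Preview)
Your proof is correct and follows essentially the same route as the paper: first establish $\tau^{\exp_l(x)}(y)=y+\sum_J m_J\,P_J(x;y)$, then invert via the geometric series using the concatenation identity $P_I(x;P_J(x;y))=P_{I\|J}(x;y)$. The only cosmetic difference is that the paper obtains the $m_J$-expansion by applying the Dynkin--Specht--Wever Lemma directly to $\frac{1}{(n+1)!}\gamma_{y\partial_x}(x^{n+1})$ with explicit binomial coefficients, whereas you reach the same formula by unfolding the one-step recursion of Lemma~\ref{lem:tau}; these are the same computation organized two ways.
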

\begin{proof}
Let $y\partial_x$ be the derivation of $\field\{\{x,y\}\}$ determined by (\ref{eq:derivation ydeltax}). The Dynkin-Specht-Wever Lemma implies
\begin{multline}\nonumber
	\frac{1}{(n+1)!} \gamma_{y\partial_x}(x^{n+1}) = \frac{1}{(n+1)!} \sum_{\substack{J = (j_1,\dots, j_s)\\ j_1+\cdots+j_s = n}} {\binom{n}{j_1 -1}{\binom{n-j_1}{j_2 -1}}}\cdots \\
	\cdots {\binom{n-j_1-\cdots -j_s}{j_s-1}} P_J(x;y)
\end{multline}
so $\tau^{\exp_l(x)}(y) = y + \sum_J m_J P_J(x;y)$ and
\begin{displaymath}
	(\tau^{\exp_l(x)})^{-1}(y) = y + \sum_{\substack{l\geq 1 \\J_1,\dots,J_l}} (-1)^l m_{J_1}\cdots m_{J_l}P_{J_1}(x;P_{J_2}(x;\cdots (P_{J_l}(x;y)))).
\end{displaymath}
Since $P_{J_1}(x;P_{J_2}(x;\cdots (P_{J_l}(x;y)))) = P_{J_1\vert\vert \cdots \vert\vert J_l}(x;y)$, the result follows.
\end{proof}

\begin{corollary}\label{cor:Magnus}
Let $H$ be a unital algebra and $A(t) \in H[[t]]$. The solution $\Omega(t) \in H[[t]]$ of the equation 
\begin{displaymath}
	X'(t) = X(t)A(t)
\end{displaymath}
with $X(t) := \exp_l(\Omega(t))$ and $\Omega(0) = 0$ satisfies
\begin{equation}\label{eq:nonassociative_Magnus_recursion}
	\Omega'(t)= A(t) + \sum_{J} n_J P_J(\Omega(t);A(t))
\end{equation}	
where $J$ runs over all possible tuples with the components $\geq 1$. 
\end{corollary}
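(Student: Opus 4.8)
The plan is to combine the Proposition, specialized to the base $e(x) = \exp_l(x)$, with Theorem~\ref{thm:degOneExpansion}; the whole argument is essentially a substitution, with no independent computation required. First I would observe that $\exp_l(x)$ is a base for logarithms (indeed a normalized one, since its degree-one component in $x$ equals $x$), so the Proposition applies verbatim with $e(x) = \exp_l(x)$ and $X(t) = \exp_l(\Omega(t))$. It yields that the solution $\Omega(t)$ of $X'(t) = X(t)A(t)$ with $\Omega(0) = 0$ satisfies $\Omega'(t) = (\tau^{\exp_l(\Omega(t))})^{-1}(A(t))$, which by the definition adopted in the Proposition means that the series $(\tau^{\exp_l(x)})^{-1}(y) \in \field\{\{x,y\}\}$ is to be evaluated at $x = \Omega(t)$ and $y = A(t)$ in $H[[t]]$.

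Next I would feed in the explicit expansion furnished by Theorem~\ref{thm:degOneExpansion}, namely $(\tau^{\exp_l(x)})^{-1}(y) = y + \sum_J n_J P_J(x;y)$ with $J$ running over all tuples with entries $\geq 1$, and substitute $x = \Omega(t)$, $y = A(t)$ term by term. Each $P_J(\,\cdot\,;\,\cdot\,)$ is a non-associative polynomial operation assembled from the binary product together with the bracket $\langle\,\,;\,,\,\rangle$, so the evaluation map $\field\{\{x,y\}\} \to H[[t]]$ sending $x \mapsto \Omega(t)$ and $y \mapsto A(t)$ respects each $P_J$ individually; term-by-term evaluation therefore transports the identity of Theorem~\ref{thm:degOneExpansion} to $H[[t]]$ as soon as the image series is seen to converge there.

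The one point genuinely in need of verification is precisely this convergence, and it is the only place where the hypothesis $\Omega(0)=0$ is used. For $J = (j_1,\dots,j_s)$ the operation $P_J(x;y)$ is homogeneous of degree $j_1 + \cdots + j_s \geq 1$ in $x$ and of degree $1$ in $y$; since $\Omega(0) = 0$ forces $\Omega(t)$ to have $t$-order at least $1$, the summand $P_J(\Omega(t);A(t))$ has $t$-order at least $j_1 + \cdots + j_s$. Consequently, for each $N$ only the finitely many tuples $J$ with $j_1 + \cdots + j_s \leq N$ can contribute to the coefficient of $t^N$, so $\sum_J n_J P_J(\Omega(t);A(t))$ is a well-defined element of $H[[t]]$ and the substitution is legitimate. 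Comparing the two expressions for $\Omega'(t)$ then gives $\Omega'(t) = A(t) + \sum_J n_J P_J(\Omega(t);A(t))$, which is the assertion. I do not anticipate a real obstacle here: the substantive work has already been carried out in the Proposition (the passage from the differential equation to $(\tau^{e})^{-1}$) and in Theorem~\ref{thm:degOneExpansion} (the explicit form of $(\tau^{\exp_l})^{-1}$), and the only subtlety, the $t$-adic well-definedness of the evaluation, is guaranteed by the $y$-linearity of the $P_J$ together with the initial condition $\Omega(0)=0$.
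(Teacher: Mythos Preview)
Your proposal is correct and follows exactly the approach the paper intends: the corollary is stated without proof because it is the immediate combination of the Proposition (giving $\Omega'(t)=(\tau^{\exp_l(\Omega(t))})^{-1}(A(t))$) with Theorem~\ref{thm:degOneExpansion} (giving the explicit expansion of $(\tau^{\exp_l(x)})^{-1}(y)$). Your added check of $t$-adic convergence via the homogeneity of $P_J$ in $x$ and the condition $\Omega(0)=0$ is a point the paper leaves implicit, but it is the right justification.
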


%
\subsection{A non-associative Baker-Campbell-Hausdorff formula.}
We will use the formula for $(\tau^{\exp_l(x)})^{-1}$ in Theorem~\ref{thm:degOneExpansion} to describe, in terms of the Shestakov-Umirbaev operations, the differential equation satisfied by $\log_l(\exp_l(x)\exp_l(ty))$.
\begin{proposition}\label{prop:BCH} 
Let $e(x)$ be a normalized base for logarithms. In $\field\{\{x,y\}\}$ we have
\begin{displaymath}
	\log_{e}(e(x)e(y)) = x + (\tau^{e(x)})^{-1} (y) + O(y^2).
\end{displaymath}
\end{proposition}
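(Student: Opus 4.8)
The plan is to reduce everything to a first-order computation in $y$ and then recycle the identity behind the earlier Proposition on $\tau^{e(\Omega(t))}$. First I would introduce an auxiliary parameter $t$, set $\Omega(t) := \log_e(e(x)e(ty))$, and observe that $X(t) := e(\Omega(t)) = e(x)e(ty)$ is group-like (a product of group-like elements) with $\Omega(0) = \log_e(e(x)) = x$. Writing $\BCH_e(x,ty) = \sum_{k\ge 0} t^k C_k(x,y)$ with $C_k$ the component of $\BCH_e(x,y)$ homogeneous of degree $k$ in $y$, I have $C_0 = x$ and $C_1 = \Omega'(0)$, so the proposition reduces to the single identity $\Omega'(0) = (\tau^{e(x)})^{-1}(y)$.

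The key tool will be $\tau^{e(\Omega(t))}(\Omega'(t)) = X(t)\backslash X'(t)$, which is precisely the chain of equalities established in the proof of the earlier Proposition and relies only on $X(t) = e(\Omega(t))$, the formal chain rule $\frac{d}{dt}e(\Omega(t)) = \frac{d}{ds}\big|_{s=0}e(\Omega(t)+s\Omega'(t))$, the definition of $\tau$, and the group-like identity $g(g\backslash w) = w$. I would then compute $X'(t)$ from the factored form $X(t) = e(x)e(ty)$: since $e(x)$ is constant in $t$ and the product is bilinear, $X'(t) = e(x)\,\tfrac{d}{dt}e(ty)$, and $\tfrac{d}{dt}e(ty) = e(ty)\,\tau^{e(ty)}(y)$ by the same chain rule applied to the curve $t\mapsto ty$.

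Next I would evaluate at $t=0$, where the normalization $e_1(x)=x$ enters: it gives $e(0)=1$ and $\tau^{e(0)}(y)=y$, hence $\tfrac{d}{dt}\big|_{t=0}e(ty)=y$, $X'(0)=e(x)\,y$, and $X(0)=e(x)$. Substituting into the key identity yields $\tau^{e(x)}(\Omega'(0)) = e(x)\backslash(e(x)y) = y$, again by the group-like identity. Because $\tau^{e(x)}$ is a $\field$-linear map in $y$ whose component of degree $0$ in $x$ is the identity (once more by normalization), it is invertible, and I conclude $\Omega'(0) = (\tau^{e(x)})^{-1}(y)$, which is the asserted formula.

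The step I expect to be the main obstacle — and the reason for extracting only the value at $t=0$ rather than citing the earlier Proposition verbatim — is that non-associativity prevents $X'(t) = e(x)\bigl(e(ty)\,\tau^{e(ty)}(y)\bigr)$ from being rewritten as $(e(x)e(ty))A(t) = X(t)A(t)$ for a fixed $A(t)$: the associator obstructs the reassociation for general $t$. Only at $t=0$, where $e(ty)$ degenerates to the unit, does the associator vanish and the expression collapse to $X(0)A(0)$ with $A(0)=y$. Since the proposition only claims the linear-in-$y$ term, evaluating at $t=0$ is exactly enough; the higher-order corrections, controlled by the full non-associative Magnus recursion, are swept into $O(y^2)$.
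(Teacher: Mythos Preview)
Your proof is correct and follows essentially the same route as the paper: introduce $\Omega(t)=\log_e(e(x)e(ty))$, use the identity $\tau^{e(\Omega(t))}(\Omega'(t))=X(t)\backslash X'(t)$ with $X(t)=e(x)e(ty)$, and evaluate at $t=0$ to obtain $\tau^{e(x)}(\Omega'(0))=e(x)\backslash(e(x)y)=y$. Your added commentary on why the associator obstruction forces evaluation at $t=0$ (rather than invoking the earlier proposition for general $t$) is a helpful elaboration, but the argument itself matches the paper's proof line for line.
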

\begin{proof}
Consider $\Omega(t) := \log_{e}(e(x)e(ty)) = x + \Omega'(0) t + O(t^2)$. Since
\begin{align*}
	\tau^{e(\Omega(t))}(\Omega'(t)) &= e(\Omega(t))\left\backslash \frac{d}{dt}e(\Omega(t))\right. = (e(x)e(ty)) \left\backslash \frac{d}{dt}(e(x)e(ty))\right. \\
	&= (e(x) e(ty)) \left\backslash \left(e(x) \frac{d}{dt}e(ty)\right),\right.
\end{align*}
evaluating at $t = 0$, we get $\tau^{e(x)}(\Omega'(0)) = y$ so $\Omega'(0)=(\tau^{e(x)})^{-1} (y)$.
\end{proof}
In the case when $e(x)$ is $\exp_l(x)$ or $\exp_r(x)$, Proposition~\ref{prop:BCH} was proved in \cite{GW}.
\begin{example}
The components of degree $0, 1, 2 $ and $3$ of $\tau^{\exp_l(x)}(y)$ are $\tau_0 = y$,  $\tau_1 = \frac{1}{2}\langle x,y \rangle$, $\tau_2 = \frac{1}{3}\langle x; x,y \rangle + \frac{1}{6}\langle x, \langle x,y\rangle \rangle$ and $\tau_3 = \frac{1}{8}\langle x,x; x,y \rangle + \frac{1}{8}\langle x;x,\langle x,y\rangle\rangle + \frac{1}{12}\langle x,\langle x;x,y\rangle \rangle + \frac{1}{24}\langle x,\langle x,\langle x,y\rangle\rangle\rangle$.
Thus, the component of degree one in $y$ in $\log_l(\exp_l(x)\exp_l(y))$ is 
\begin{multline}\nonumber
	 y- \frac{1}{2}\langle x,y\rangle + \left(\frac{1}{12}\langle x,\langle x,y\rangle \rangle - \frac{1}{3}\langle x;x,y\rangle\right) 
	 +\\ \left(\frac{1}{12}\langle x,\langle x;x,y\rangle\rangle + \frac{1}{24}\langle x;x,\langle x,y\rangle\rangle - \frac{1}{8}\langle x,x;x,y\rangle \right)  + \cdots 
\end{multline}

We can compute directly the coefficient of $\langle x;x,\langle x,y\rangle \rangle$ in $\log_l(\exp_l(x)\exp_l(y))$, for instance. 
Since $\langle x;x,\langle x,y\rangle \rangle = P_{(2,1)}(x;y)$ then Theorem~\ref{thm:degOneExpansion} ensures that this coefficient equals $n_{(2,1)} = m_{(2)}m_{(1)} - m_{(2,1)} = \frac{1}{3}\frac{1}{1!}\frac{1}{2}\frac{1}{0!} - \frac{1}{4}\frac{1}{1!}\frac{1}{2}\frac{1}{0!}=\frac{1}{24}$.
\ \hfill $\qed$
\end{example}
\begin{proposition}[Magnus expansion for the Baker-Campbell-Hausdorff formula]\label{prop:BCHrecurr}
Let $\Omega(t) := \log_l(\exp_l(x)\exp_l(ty))$. In $\field\{\{x,y\}\}[[t]]$ we have
\begin{multline}\nonumber
	\Omega'(t) = \left(y+ \sum_Jn_JP_J(\Omega(t);y)\right)\\
	\quad- \left( \Phi(\exp_l(x);\exp_l(ty);y) + \sum_JP_J(\Omega(t);\Phi(\exp_l(x);\exp_l(ty);y)\right).
\end{multline}
\end{proposition}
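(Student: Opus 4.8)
The plan is to follow the proof of Proposition~\ref{prop:BCH} verbatim up to the point where $t$ is set to $0$, and instead keep $t$ as a parameter. Write $a:=\exp_l(x)$ and $b:=\exp_l(ty)$, so that $X(t)=ab=\exp_l(\Omega(t))$ is group-like as a product of group-like elements. Since $t$ is central, $\exp_l(ty)=\sum_{n\ge 0}\frac{t^n}{n!}y^n$ with $y^n$ the left-normed power; differentiating term by term and using that $y^{n}=y^{n-1}y$ together with the left-linearity of the product gives $\frac{d}{dt}\exp_l(ty)=\exp_l(ty)\,y$, hence $X'(t)=a(by)$. Exactly as in Proposition~\ref{prop:BCH}, the chain rule gives $\tau^{\exp_l(\Omega(t))}(\Omega'(t))=X(t)\backslash X'(t)=:A(t)$, and therefore $\Omega'(t)=(\tau^{\exp_l(\Omega(t))})^{-1}(A(t))$; this step uses only $X(t)=\exp_l(\Omega(t))$ and not the value of $\Omega(0)$. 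Everything reduces to evaluating $A(t)=(ab)\backslash\bigl(a(by)\bigr)$ and then inverting $\tau$ by means of Theorem~\ref{thm:degOneExpansion}.

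To compute $A(t)$ I would introduce the associator $(a,b,y)=(ab)y-a(by)$, so that $a(by)=(ab)y-(a,b,y)$, and split, using linearity of the division in its second argument,
$$A(t)=(ab)\backslash\bigl((ab)y\bigr)-(ab)\backslash(a,b,y).$$
For the first summand, group-likeness of $ab$ and the left-division axiom (\ref{eq:left_division}) give $(ab)\backslash\bigl((ab)y\bigr)=\epsilon(ab)\,y=y$. For the second summand, the defining formula (\ref{eq:p}) for $p$ applied to the group-like elements $a,b$, whose coproducts $a\otimes a$ and $b\otimes b$ collapse the Sweedler sums to single terms, gives $(ab)\backslash(a,b,y)=p(a,b,y)$. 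Thus $A(t)=y-p(\exp_l(x),\exp_l(ty),y)$.

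The next step identifies $p(\exp_l(x),\exp_l(ty),y)$ with $\Phi(\exp_l(x);\exp_l(ty);y)$. Expanding $a=\sum_m\frac{1}{m!}x^m$ and $b=\sum_n\frac{t^n}{n!}y^n$ and using $\sum_m\frac{1}{m!}\Delta(x^m)=a\otimes a$ and $\sum_n\frac{t^n}{n!}\Delta(y^n)=b\otimes b$, the four-fold Sweedler expression defining $p(a,b,y)$ factors as $\sum_{m,n}\frac{t^n}{m!\,n!}\,p(x,\dots,x;y,\dots,y;y)$, with $m$ copies of $x$ and $n$ copies of $y$. Since within each block all entries coincide, every permutation occurring in the symmetrization defining $\Phi$ contributes the same term, whence $p(x,\dots,x;y,\dots,y;y)=\Phi(x,\dots,x;y,\dots,y;y)$; summing over $m,n$ recovers $\Phi(\exp_l(x);\exp_l(ty);y)$. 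Consequently $A(t)=y-\Phi(\exp_l(x);\exp_l(ty);y)$.

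Finally, I would substitute $x\mapsto\Omega(t)$ into the expansion $(\tau^{\exp_l(x)})^{-1}(w)=w+\sum_J n_J P_J(x;w)$ furnished by Theorem~\ref{thm:degOneExpansion}, which holds for an arbitrary element $w$ in the innermost slot because $\langle\,\cdot\,;\,\cdot\,,\,\cdot\,\rangle$, and hence each $P_J$, is linear there. Applying this with $w=y$ and with $w=\Phi(\exp_l(x);\exp_l(ty);y)$ and subtracting the two expansions yields the asserted formula; the substitution is legitimate in $\field\{\{x,y\}\}[[t]]$ because $\Omega(t)=x+O(t)$ and $P_J$ raises the $x$-degree with the length of $J$, so only finitely many $J$ contribute in any fixed total degree. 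The main obstacle I anticipate is not analytic but bookkeeping: carefully matching the coproduct-defined operation $p$, evaluated on the group-like series $a$ and $b$, with the symmetrized operation $\Phi$ in the third paragraph. The remaining steps are a chain-rule computation and a direct application of the already-established inversion formula.
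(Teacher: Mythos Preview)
Your proposal is correct and follows essentially the same route as the paper: compute $\tau^{\exp_l(\Omega(t))}(\Omega'(t))=(ab)\backslash(a(by))$, rewrite this via the associator as $y-p(a;b;y)$, identify $p$ with $\Phi$ on these group-like arguments, and then invert $\tau$ using Theorem~\ref{thm:degOneExpansion}. Your write-up in fact supplies more justification than the paper does at the two points it leaves tacit---namely why $(ab)\backslash((ab)y)=y$ and $(ab)\backslash(a,b,y)=p(a;b;y)$ collapse via group-likeness, and why the symmetrization in $\Phi$ is vacuous when all $x$-slots and all $y$-slots coincide---so no additional ideas are needed.
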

\begin{proof}
We have
\begin{align*}
	\tau^{\exp_l(\Omega(t))}(\Omega'(t)) &= \exp_l(\Omega(t))\left\backslash \frac{d}{dt}\exp_l(\Omega(t))\right. \\
	&= (\exp_l(x) \exp_l(ty)) \left\backslash \left(\exp_l(x) \frac{d}{dt}\exp_l(ty)\right)\right.\\
	&=  (\exp_l(x) \exp_l(ty)) \left\backslash (\exp_l(x) (\exp_l(ty)y))\right.\\
	&=  y - p(\exp_l(x); \exp_l(ty);y) \\
	&= y - \Phi(\exp_l(x);\exp_l(ty);y)
\end{align*}
so $\Omega'(t) = (\tau^{\exp_l(\Omega(t))})^{-1}(y) - (\tau^{\exp_l(\Omega(t))})^{-1}(\Phi(\exp_l(x);\exp_l(ty);y))$. 
The result follows from Theorem~\ref{thm:degOneExpansion}.
\end{proof}
\begin{example}
The component of $\log_l(\exp_l(x)\exp_l(y))$ of degree $1$ in $x$ and degree $2$ in $y$ is 
\begin{align*}
	\Omega_{1,2} &:= \frac{1}{2}\left( n_{(1)}\langle \Omega_{1,1},y\rangle  + n_{(1,1)}\langle y,\langle x,y\rangle\rangle + n_{(2)}\langle y;x,y\rangle  -\Phi(x;y,y)\right)\\
	& = -\frac{1}{12}[y,[x,y]] +  \frac{1}{6}\langle y;y,x\rangle - \frac{1}{2}\Phi(x;y,y).
\end{align*}
\ \hfill\qed
\end{example}

Based on Proposition~\ref{prop:BCHrecurr} we can compute the initial terms of the expansion of  $\log_l(\exp_l(x)\exp_l(y))$. 

\begin{theorem}[Non-associative Baker-Campbell-Hausdorff Formula]\label{thm:BCH}
The expansion of $\log_l(\exp_l(x)\exp_l(y))$ in $\field\{\{x,y\}\}$ is
%
\begin{align*}
		& x+y + \frac{1}{2}[x,y]\\
		& \quad +\frac{1}{12}[x,[x,y]] - \frac{1}{3}\langle x;x,y\rangle 
		-\frac{1}{12} [y,[x,y]] - \frac{1}{6} \langle y; x,y \rangle -\frac{1}{2}\Phi(x;y,y) \\
		&\quad -\frac{1}{24}\langle x; x, [x,y]\rangle - \frac{1}{12} [x,\langle x;x,y\rangle] - \frac{1}{8}\langle x,x;x,y\rangle \\
		&\quad +\frac{1}{24}[[x,[x,y]],y] - \frac{1}{24}[x,\langle y;x,y\rangle] -\frac{1}{4}\Phi(x,x;y,y) - \frac{1}{4}[x,\Phi(x;y,y)] \\
		&\quad -\frac{1}{24}[\langle x;x,y\rangle,y] -\frac{1}{24}\langle x; [x,y],y\rangle - \frac{1}{6}\langle x,y;x,y\rangle + \frac{1}{24}\langle y,x;x,y\rangle \\
		& \quad + \frac{1}{12} [\Phi(x;y,y),y] + \frac{1}{24}\langle y; y,[x,y]\rangle - \frac{1}{24} \langle y,y;x,y\rangle - \frac{1}{6}\Phi(x;y,y,y)
\end{align*}
plus terms of degree $\geq 5$.
\end{theorem}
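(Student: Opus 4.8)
\section*{Proof proposal}

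The plan is to integrate the Magnus-type differential equation of Proposition~\ref{prop:BCHrecurr} order by order in $t$ and then evaluate at $t=1$. Writing $\Omega(t) := \log_l(\exp_l(x)\exp_l(ty))$, each $y$ produced by $\exp_l(ty)$ carries a factor of $t$, so the power of $t$ records exactly the degree in $y$; I decompose $\Omega(t) = \sum_{k\geq 0} \Omega^{(k)} t^k$, where $\Omega^{(k)}$ is homogeneous of degree $k$ in $y$, and the target series is $\Omega(1) = \sum_k \Omega^{(k)}$ truncated at total degree $4$. Since $\Omega'(t) = \sum_{k\geq 1} k\,\Omega^{(k)} t^{k-1}$, matching the coefficient of $t^{k-1}$ on both sides of Proposition~\ref{prop:BCHrecurr} expresses $\Omega^{(k)}$ as $\frac{1}{k}$ times the $t^{k-1}$-coefficient of the right-hand side. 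Because that side depends on $\Omega(t)$ only through the slots of the $P_J$, while the trailing argument and the $\Phi$-term carry independent factors of $t$, its $t^{k-1}$-coefficient involves only $\Omega^{(0)},\dots,\Omega^{(k-1)}$; this is a genuine explicit recursion.

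First I would record the base data. One has $\Omega^{(0)} = x$, and by Proposition~\ref{prop:BCH} the degree-one-in-$y$ part is $\Omega^{(1)} = (\tau^{\exp_l(x)})^{-1}(y)$, whose expansion through total degree $4$ is furnished by Theorem~\ref{thm:degOneExpansion} (equivalently, by the computation carried out in the Example following Proposition~\ref{prop:BCH}): I read off the coefficients $n_J$ for the relevant tuples and rewrite each $P_J(x;y)$ in the operations $[\,,\,]$ and $\langle\,\,;\,,\,\rangle$. I would also observe that the part of $\log_l(\exp_l(x)\exp_l(y))$ of degree $0$ in $x$ equals $\log_l(\exp_l(y)) = y$; hence $\Omega^{(k)}$ has no term of degree $0$ in $x$ for $k\geq 2$, so to reach total degree $4$ it suffices to compute $\Omega^{(2)}$ through $x$-degree $2$ and $\Omega^{(3)}$ through $x$-degree $1$, while $\Omega^{(4)}$ has no term of total degree $\leq 4$.

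Next I would run the recursion. For $\Omega^{(2)}$ I extract the $t^1$-coefficient of the right-hand side of Proposition~\ref{prop:BCHrecurr}: substituting $\Omega(t) = x + t\,\Omega^{(1)} + \cdots$ into $\sum_J n_J P_J(\Omega(t);y)$ contributes precisely the pieces obtained by placing the first-order correction $\Omega^{(1)}$ in one slot of a $P_J$, while $\Phi(\exp_l(x);\exp_l(ty);y)$ contributes its lowest term $t\,\Phi(x;y,y)$; dividing by $2$ yields $\Omega^{(2)}$, whose component $\Omega_{1,2}$ is exactly the one computed in the Example after Proposition~\ref{prop:BCHrecurr}. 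For $\Omega^{(3)}$ I extract the $t^2$-coefficient, now feeding back both $\Omega^{(1)}$ and $\Omega^{(2)}$ into the slots of the $P_J$, together with the $t^2$-part of the $\Phi$-term and the first $P_J$-corrections coming from $\sum_J P_J(\Omega(t);\Phi(\cdots))$; dividing by $3$ gives $\Omega^{(3)}$ through $x$-degree $1$. Summing $\Omega^{(0)}+\Omega^{(1)}+\Omega^{(2)}+\Omega^{(3)}$ and discarding terms of degree $\geq 5$ should produce the stated formula.

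The main obstacle is not conceptual but the normalization of the output into the displayed canonical form. Each expansion of a $P_J$ and of $\Phi(\exp_l(x);\exp_l(ty);y)$ produces nested brackets of the shape $\langle\,\cdots;\,\cdot,\langle\cdots\rangle\rangle$ together with symmetrized $p$-operations, which must be collapsed, using the antisymmetry of $\langle u;a,b\rangle$ in its last two arguments, the symmetrizations built into the definition of $\Phi$, and the identity~(\ref{eq:brackets}) relating associators to the operations $\langle\,\,;\,,\,\rangle$, into the specific combination of $[\,,\,]$, $\langle\,\,;\,,\,\rangle$ and $\Phi(\,;\,;\,)$ appearing in the theorem. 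Since several distinct tuples $J$ and several orders in $t$ feed into the same final monomial (terms such as $[x,\langle x;x,y\rangle]$ or $\langle x,y;x,y\rangle$ receive contributions from more than one place), careful bookkeeping of the rational coefficients is essential; the individual checks in the preceding Examples, such as the verification that the coefficient of $\langle x;x,\langle x,y\rangle\rangle$ equals $n_{(2,1)} = \frac{1}{24}$, together with the independent degree-$3$ computation via Lemma~\ref{lem:DSW}, serve as consistency tests for the full calculation.
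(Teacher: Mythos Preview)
Your proposal is correct and is exactly the approach the paper takes: the paper simply states that the theorem follows by computing the initial terms via the recursion of Proposition~\ref{prop:BCHrecurr}, and the Example preceding the theorem illustrates precisely the step you describe for $\Omega^{(2)}$. Your write-up is in fact a more detailed account of that same calculation than the paper itself provides.
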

%
\section{A connection with Bernoulli numbers and binary trees}
Formulas (\ref{eq:Magnus_recursion}) and (\ref{eq:nonassociative_Magnus_recursion}) give an alternative point of view on the relation between the numbers $\{n_J\}_J$ and $\{B_k\}_k$ which has been established in \cites{Wo97,Fu00}. 

\begin{theorem}
We have
\begin{displaymath}
	\frac{B_k}{k!} = n_{\underbrace{(1,\dots,1)}_{k}}.
\end{displaymath}
\end{theorem}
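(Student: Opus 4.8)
The plan is to push the formal identity of Theorem~\ref{thm:degOneExpansion} into the free \emph{associative} setting, where $(\tau^{\exp_l(x)})^{-1}$ collapses to the classical generating operator for the Bernoulli numbers, and then to read off the coefficient attached to the all-ones tuples.

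First I would recall that $(\tau^{\exp_l(x)})^{-1}(y) = y + \sum_J n_J P_J(x;y)$ holds in $\field\{\{x,y\}\}$, and apply the canonical projection $\pi\colon \field\{\{x,y\}\}\to\field\langle\langle x,y\rangle\rangle$ onto the free associative power series. This $\pi$ is a morphism of topological Hopf algebras fixing the generators, so it commutes with the divisions and with the derivation $y\partial_x$, hence with the formation of $\tau$. Since every $p(\underline{x},\underline{y},z)$ is built from associators, $\pi$ annihilates $\langle x_1,\dots,x_m;u,v\rangle$ for $m\geq 1$ and all $\Phi$, while $\langle 1;u,v\rangle=-[u,v]$ survives. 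Because the commutator is bilinear and $\pi$ is an algebra map, an induction from the innermost bracket shows $\pi(P_J(x;y))=0$ as soon as some $j_i\geq 2$; thus only $J=(\underbrace{1,\dots,1}_{k})$ contributes, and there each factor $\langle\,;x,\cdot\rangle=-[x,\cdot]$ gives $\pi(P_{(1,\dots,1)}(x;y))=(-1)^k\ad_x^k(y)$.

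Consequently the projected identity reads $(\tau^{\exp(x)})^{-1}(y)=y+\sum_{k\geq 1}n_{(1,\dots,1)}(-1)^k\ad_x^k(y)$ in $\field\langle\langle x,y\rangle\rangle$. The next step is to evaluate the left-hand side directly: in the associative case $\exp(x)\backslash(\cdot)=\exp(-x)(\cdot)$ and $\frac{d}{ds}\vert_{s=0}\exp(x+sy)=\exp(x)\,\frac{1-e^{-\ad_x}}{\ad_x}(y)$, so $\tau^{\exp(x)}$ is the operator $\frac{1-e^{-\ad_x}}{\ad_x}$, whose inverse expands as $\frac{\ad_x}{1-e^{-\ad_x}}=\sum_{n\geq 0}\frac{B_n}{n!}(-1)^n\ad_x^n$. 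Comparing the coefficients of the Lie monomials $\ad_x^k(y)$, which are linearly independent in $\field\langle\langle x,y\rangle\rangle$, yields $n_{(1,\dots,1)}=B_k/k!$. The only points demanding care are the compatibility of $\tau$ with $\pi$ and the explicit associative evaluation of $(\tau^{\exp(x)})^{-1}$, which is precisely the derivative-of-the-exponential identity underlying the classical Magnus recursion~(\ref{eq:Magnus_recursion}).

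A self-contained alternative avoids the Hopf-algebraic bookkeeping entirely. From the definition one computes $m_{(1,\dots,1)}=1/(k+1)!$ for the all-ones tuple with $k$ entries, so, writing $f(t):=\sum_{a\geq 1}t^a/(a+1)!=(e^t-1-t)/t$ and grouping the concatenation $J=J_1\vert\vert\cdots\vert\vert J_l$ by block sizes, one gets $\sum_{k\geq 1}n_{(1,\dots,1)}t^k=\sum_{l\geq 1}(-1)^l f(t)^l=-f/(1+f)=\tfrac{t}{e^t-1}-1$, which is exactly $\sum_{k\geq 1}(B_k/k!)t^k$. I expect the first, Magnus-based argument to be the one intended here, since it explains \emph{why} the all-ones tuples single out the Bernoulli numbers, whereas the generating-function computation merely confirms the identity.
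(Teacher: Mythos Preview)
Your principal argument is essentially the paper's own proof: project Theorem~\ref{thm:degOneExpansion} to $\field\langle\langle x,y\rangle\rangle$, observe that all $P_J$ with some $j_i\geq 2$ die because the higher Shestakov--Umirbaev operations vanish associatively, and identify the surviving series with the classical derivative-of-exponential expansion $\sum_k(-1)^k\tfrac{B_k}{k!}\ad_x^k$. Your self-contained alternative via the generating function $f(t)=(e^t-1-t)/t$ and the geometric series $-f/(1+f)=t/(e^t-1)-1$ is correct and is not in the paper's proof of this theorem, though it is morally the computation underlying the subsequent Section~4 discussion of Woon's and Fuchs's trees (where the node values are exactly $(-1)^l\prod_i 1/(a_i+1)!$); so the two routes you give correspond, respectively, to the Magnus-based argument the paper chose and to the tree combinatorics it develops afterwards.
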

\begin{proof}
By definition, $\langle x_1,\dots,x_m; y,z \rangle$ and $\Phi(x_1,\dots, x_m; y_1,\dots, y_n; y_{n+1})$ vanish in any associative algebra, with the only exception of $\langle y,z\rangle $. Thus, after projecting from $\field\{\{x,y\}\}$, in  $\field \langle\langle x,y \rangle\rangle$ we get 
\begin{align*}
(\tau^{\exp(x)})^{-1}(y) &= y + \sum_{J = (1,\dots, 1)} n_J P_J(x;y) = \sum_{k=0}^\infty n_{\underbrace{(1,\dots,1)}_{k}} (-1)^k \ad_{x}^k(y).
\end{align*}
Since it is well-known that $(\tau^{\exp(x)})^{-1}(y) = \sum_{k=0}^{\infty} (-1)^k\frac{B_k}{k!}\ad_{x}^k(y)$ holds in $\field \langle\langle x,y \rangle\rangle$  we get the result; the sign $(-1)^k$ in the latter formula comes from our choice $\tau^{\exp(x)} := \exp(x) \backslash \left.\frac{d}{ds}\right\vert_{s=0} \exp(x+s y)$ instead of $\left.\frac{d}{ds}\right\vert_{s=0} \exp(x+s y)  / \exp(x)$.
\end{proof}
In \cite{Wo97} Woon gave an algorithm to compute $B_n/n!$ with the help of the binary tree 
\begin{center}
	\begin{tikzpicture}
		\node (v1) at (-1,3) {$[1,2]$};
		\node (v2) at (-3,2) {$[-1,3]$};
		\node (v3) at (1,2) {$[1,2,2]$};
		\node at (5,3) {level 1};
		\node at (5,2) {level 2};
		\node (v4) at (-4,1) {$[1,4]$};
		\node (v5) at (-2,1) {$[-1,2,3]$};
		\node (v6) at (0,1) {$[-1,3,2]$};
		\node (v7) at (2,1) {$[1,2,2,2]$};
		\draw  (v1) edge (v2);
		\draw  (v1) edge (v3);
		\draw  (v2) edge (v4);
		\draw  (v2) edge (v5);
		\draw  (v3) edge (v6);
		\draw  (v3) edge (v7);
		\node at (-1,1) {$\vdots$};
		\node at (5,1) {$\vdots$};
	\end{tikzpicture}
\end{center}
Here, the nodes are labeled by $[a_1,\dots, a_r]$; 
the root is $[1,2]$  and at any node  we have
\begin{center}
	\begin{tikzpicture}
		\node (v1) at (0,2) {$[a_1,\dots, a_r]$};
		\node (v2) at (-2,1) {$[-a_1,a_2+1,\dots, a_r]$};
		\node (v3) at (2,1) {$[a_1,2,a_2,\dots, a_r]$};
		\draw  (v1) edge (v2);
		\draw  (v1) edge (v3);
	\end{tikzpicture}
\end{center}
The factorial of the node $N=[a_1,\dots, a_r]$ is $N! := a_1 (a_2!\cdots a_r!)$. 
Woon proved the equality
\begin{displaymath}
	\frac{B_k}{k!} = \sum_{N} \frac{1}{N!}
\end{displaymath}
for $k \geq 2$, where $N$ runs over the nodes in the level $k$. In \cite{Fu00} Fuchs extended this construction as follows. Consider the \emph{general PI binary tree}
\begin{center}
	\begin{tikzpicture}
		\node (v1) at (-1,3) {$(1)$};
		\node (v2) at (-3,2) {$(1,1)$};
		\node (v3) at (1,2) {$(2)$};
		\node at (5,3) {level 1};
		\node at (5,2) {level 2};
		\node at (5,1) {$\vdots$};
		\node (v4) at (-4,1) {$(1,1,1)$};
		\node (v5) at (-2,1) {$(2,1)$};
		\node (v6) at (0,1) {$(1,2)$};
		\node (v7) at (2,1) {$(3)$};
		
		\draw  (v1) edge (v2);
		\draw  (v1) edge (v3);
		\draw  (v2) edge (v4);
		\draw  (v2) edge (v5);
		\draw  (v3) edge (v6);
		\draw  (v3) edge (v7);
		\node at (-1,1) {$\vdots$};
	\end{tikzpicture}
\end{center}
with root $(1)$ and at each node
\begin{displaymath}
	\begin{tikzpicture}
		\node (v1) at (0,2) {$(a_1,\dots, a_r)$};
		\node (v2) at (-2,1) {$(1,a_1,a_2,\dots, a_r)$};
		\node (v3) at (2,1) {$(a_1+1,a_2,\dots, a_r)$};
		\draw  (v1) edge (v2);
		\draw  (v1) edge (v3);
	\end{tikzpicture}
\end{displaymath}
For any sequence $(c_n)_{n\geq 1}$ of complex numbers change the node $(a_1,\dots, a_r)$ by $c_{a_1}\cdots c_{a_r}$. 
Then define $x_k$ to be the sum of the nodes in the $k$-th level. 
This value depends on the sequence $(c_n)_{n\geq 1}$. 
In the case when $c_n = \frac{-1}{n+1!}$  we get the tree
\begin{center}
	\begin{tikzpicture}
		\node (v1) at (-1,3) {$-\frac{1}{2!}$};
		\node (v2) at (-3,2) {$\frac{1}{2!2!}$};
		\node (v3) at (1,2) {$-\frac{1}{3!}$};
		\node (v4) at (-4,1) {$-\frac{1}{2!2!2!}$};
		\node (v5) at (-2,1) {$\frac{1}{3!2!}$};
		\node (v6) at (0,1) {$\frac{1}{2!3!}$};
		\node (v7) at (2,1) {$-\frac{1}{4!}$};
		\draw  (v1) edge (v2);
		\draw  (v1) edge (v3);
		\draw  (v2) edge (v4);
		\draw  (v2) edge (v5);
		\draw  (v3) edge (v6);
		\draw  (v3) edge (v7);
		\node at (-1,1) {$\vdots$};
	\end{tikzpicture}
\end{center}
and for each $k$ we have  $x_k = B_k / k!$.  

To relate these constructions to the numbers $n_J$ in Theorem~\ref{thm:degOneExpansion} we use a binary tree to collect the summands involved in $n_J = \sum_{J=J_1 \vert\vert \cdots \vert\vert J_l} (-1)^l m_{J_1}\cdots m_{J_l}$. 
Consider associative but non-commutative indeterminates $x_1,x_2,\dots$ and the tree
\begin{center}
	\begin{tikzpicture}
		\node (v1) at (-1,3) {$(x_1)$};
		\node (v2) at (-3,2) {$(x_1,x_2)$};
		\node (v3) at (1,2) {$(x_1x_2)$};
		\node (v4) at (-4,1) {$(x_1,x_2,x_3)$};
		\node (v5) at (-2,1) {$(x_1,x_2x_3)$};
		\node (v6) at (0,1) {$(x_1x_2,x_3)$};
		\node (v7) at (2,1) {$(x_1x_2x_3)$};
		\draw  (v1) edge (v2);
		\draw  (v1) edge (v3);
		\draw  (v2) edge (v4);
		\draw  (v2) edge (v5);
		\draw  (v3) edge (v6);
		\draw  (v3) edge (v7);
		\node at (-1,1) {$\vdots$};
		\node at (5,3) {level 1};
		\node at (5,2) {level 2};
		\node at (5,1) {$\vdots$};
	\end{tikzpicture}
\end{center}
where at any node on the level $n-1$ we have
\begin{center}
	\begin{tikzpicture}
		\node (v1) at (0,2) {$(w_1,\dots, w_r)$};
		\node (v2) at (-2,1) {$(w_1,\dots, w_r,x_n)$};
		\node (v3) at (2,1) {$(w_1,\dots, w_rx_n)$};
		\draw  (v1) edge (v2);
		\draw  (v1) edge (v3);
	\end{tikzpicture}
\end{center}
Consider a sequence of numbers $a_1,a_2,\dots$. 
Define for $w=x_{i_1}\cdots x_{i_s}$ the number $m_w = m_{x_{i_1}\cdots x_{i_s}} = - m_{(a_{i_1},\dots, a_{i_s})}$ and replace any node $(w_1,\dots, w_r)$ with $m_{(w_1,\dots, w_r)} = m_{w_1}\cdots m_{w_r}$. 
The sum of the nodes in the level $n$ of the resulting tree is $n_{(a_1,\dots, a_n)}$. 
In case that $ a_1 = a_2 = \cdots =1$, in the previous construction we can replace the label $x_{i_1}\cdots x_{i_s}$ by $s$  without losing information. 
With these new labels, at any node on the level $n-1$ of the tree we have
\begin{center}
	\begin{tikzpicture}
		\node (v1) at (0,2) {$(a_1,\dots, a_r)$};
		\node (v2) at (-2,1) {$(a_1,\dots, a_r,1)$};
		\node (v3) at (2,1) {$(a_1,\dots, a_r+1)$};
		\draw  (v1) edge (v2);
		\draw  (v1) edge (v3);
	\end{tikzpicture}
\end{center}
which essentially gives the general PI binary tree. 
The number that we attach to the node $(a_1,\dots,a_r)$ is $(-m_{\underbrace{(1,\dots,1)}_{a_1}})\cdots (-m_{\underbrace{(1,\dots,1)}_{a_r}}) = \frac{-1}{(a_1+1)!}\cdots \frac{-1}{(a_r+1)!}$, so we recover the construction of Fuchs.
\section{BCH-cuts}
In this section we briefly study the coefficients of the non-associative monomials in the series $\log_l(\exp_l(x)\exp_l(y))$. 
Unfortunately, these monomials are not left-normed so we cannot directly apply the Dynkin-Specht-Wever Lemma to them to get a closed form of the Baker-Campbell-Hausdorff formula similar to (\ref{eq:Dynkin_formula}).

Given a monomial $w(x) \in \field\{x\}$, that we can identify with a binary planar rooted tree,  any tuple $c=(\tau(x),\tau_1(x),\dots, \tau_l(x))$ of monomials satisfying $w(x) = \tau(\tau_1(x),\dots, \tau_l(x))$ with $\vert \tau_i(x)\vert \geq 1$ will be called a \emph{cut} of $w(x)$ -- recall that $\vert \tau(x) \vert$ denotes the degree of $\tau(x)$ in $x$. 
Attached to $\tau_i(x)$ there is the set $\lambda(\tau_i(x)) := \{\vert\tau_1(x)\vert + \cdots + \vert \tau_{i-1}(x) \vert + 1,\dots, \vert \tau_1(x)\vert + \cdots + \vert \tau_i(x)\vert \}$. 
The pair $(\tau_i,\lambda(\tau_i))$ is a \emph{branch} of $w(x)$. 
Sometimes we will refer to $\tau_i$ as a branch of $w(x)$, although this is an abuse of notation since we should specify the positions that the branch occupies inside $w(x)$. 
Monomials $w(x,y) \in \field\{x,y\}$ are represented by binary planar rooted trees with leaves decorated with $x$ or $y$.  

\begin{lemma}
\label{lem:branches}
Let $\tau'(x),\tau''(x)$ be branches of $w(x)$ such that $ \lambda(\tau')\cap \lambda(\tau'')\neq \emptyset$. 
Then either $\lambda(\tau')\subseteq \lambda(\tau'')$ or $\lambda(\tau'') \subseteq \lambda(\tau')$.
\end{lemma}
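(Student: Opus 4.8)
The plan is to translate the set-theoretic hypothesis into the geometry of the tree $w(x)$ and then appeal to the elementary fact that the leaf-sets of subtrees of a rooted tree form a \emph{laminar family}.

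First I would pin down what a branch is in tree terms. A branch is, by definition, the pair $(\tau',\lambda(\tau'))$ coming from some cut $w = \tau(\tau_1,\dots,\tau_l)$ with $\tau' = \tau_i$; grafting $\tau_i$ onto the $i$-th leaf of $\tau$ realizes it as the subtree of $w$ rooted at a single node $v'$, and reading leaves from left to right shows that $\lambda(\tau')$ is exactly the set of leaf-positions of $w$ lying below $v'$. Thus each branch, together with its position data, determines and is determined by a node of $w$ whose leaf-descendant set is $\lambda(\tau')$.

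Granting this, the conclusion is immediate. If $\lambda(\tau')\cap\lambda(\tau'')\neq\emptyset$, choose a common position $\ell$; then the leaf $\ell$ lies below both $v'$ and $v''$, so both nodes sit on the unique path from the root of $w$ to $\ell$. As the nodes on a root-to-leaf path form a chain under the ancestor relation, $v'$ and $v''$ are comparable; supposing $v'$ is an ancestor of (or equal to) $v''$, every leaf below $v''$ lies below $v'$, whence $\lambda(\tau'')\subseteq\lambda(\tau')$, the other case being symmetric. A fully combinatorial variant avoids trees altogether: induct on $\vert w\vert$ by writing $w = w_1 w_2$, observe that a branch other than $w$ itself is a branch of $w_1$ or of $w_2$ (with positions shifted by $\vert w_1\vert$ in the latter case), apply the inductive hypothesis when the two branches lie on the same factor, and note that branches on opposite factors have position sets inside the disjoint intervals $\{1,\dots,\vert w_1\vert\}$ and $\{\vert w_1\vert+1,\dots,\vert w\vert\}$, contradicting the hypothesis that they meet.

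The only real work is in the first step: checking that the interval $\lambda(\tau') = \{\vert\tau_1\vert+\cdots+\vert\tau_{i-1}\vert+1,\dots,\vert\tau_1\vert+\cdots+\vert\tau_i\vert\}$ prescribed by the cut genuinely coincides with the leaf-descendant set of the grafting node, so that the notion of branch is faithfully a statement about nodes of $w$. Once that identification — or, in the inductive route, the reduction to $w_1$ and $w_2$ — is secured, the nesting dichotomy carries no further difficulty.
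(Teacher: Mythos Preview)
Your argument is correct. The paper itself states Lemma~\ref{lem:branches} without proof, evidently regarding it as an immediate consequence of the identification of a monomial $w(x)$ with a binary planar rooted tree; your proof makes that implicit reasoning explicit by observing that a branch $(\tau_i,\lambda(\tau_i))$ is precisely the subtree of $w$ rooted at some node, with $\lambda(\tau_i)$ the set of leaf positions below that node, so that two branches with overlapping position sets must correspond to comparable nodes on a common root-to-leaf path. The alternative inductive route via $w=w_1w_2$ that you sketch is equally valid and is the natural way to write the argument if one prefers to avoid the tree picture; either version would serve as the omitted proof.
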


The element
\begin{displaymath}
	\log_l(\exp_l(x)\exp_l(y))
\end{displaymath}
in $\field\{\{x,y\}\}$ is expanded in terms of monomials $\tau(x^{i_1}y^{j_1},x^{i_2}y^{j_2},\dots )$ with $i_1+j_1,i_2+j_2,\dots \geq 1$. Unfortunately, monomials $\tau(x^{i_1}y^{j_1},x^{i_2}y^{j_2},\dots )$ might represent the same monomial $w(x,y)$ for different values of $\tau$ and $i_1,j_1,\dots$ 
One way of computing the coefficient of $w(x,y)$ in the series $\log_l(\exp_l(x)\exp_l(y))$ is to determine the cuts of $w(x,y)$ where every branch is of the form $x^i y^j$ (\emph{BCH-cuts}). If we denote by $C(w)$ the set of all BCH-cuts of $w(x,y)$, the coefficient in $\log_l(\exp_l(x)\exp_l(y))$ of the monomial $w(x,y)$ is
\begin{displaymath}
	\sum_{(\tau,x^{i_1}y^{j_1},\dots,x^{i_{\vert\tau\vert}} y^{j_{\vert\tau\vert}}) \in C(w)} \frac{1}{i_1!\cdots i_{\vert \tau\vert}!}\frac{1}{j_1!\cdots j_{\vert \tau\vert}!}c_\tau.
\end{displaymath}
where $c_\tau  := \frac{B_\tau}{\tau!}$ is the coefficient of $\tau$ in $\log_l(1+x)$. The set $C(w)$ can be easily determined since Lemma \ref{lem:branches} implies that there exists a unique $(\tau,x^{i_1}y^{j_1},\dots,x^{i_{\vert\tau\vert}} y^{j_{\vert\tau\vert}})\in C(w)$ with minimal $\vert \tau \vert$. 
The branches of any $c \in C(w)$ can be obtained as the branches in a BCH-cut of the monomials $x^{i_1}y^{j_1}$, \dots, $x^{i_{\vert\tau\vert}} y^{j_{\vert\tau\vert}}$. 
Each monomial $x^iy^j$ produces $ij+1$ ($i,j\geq 1$), $i$ ($j=0$) or $j$ ($i =0$) BCH-cuts. 

\begin{example}
Using $[\dots]$ to delimit branches and writing $\tau([x^{i_1}y^{j_1}],\dots,[x^{i_{\vert\tau\vert}} y^{j_{\vert\tau\vert}}])$ instead of $(\tau,x^{i_1}y^{j_1},\dots,x^{i_{\vert\tau\vert}} y^{j_{\vert\tau\vert}})$, some BCH-cuts are
\medskip
\begin{displaymath}
	\begin{array}{|c|c|}
		\hline w(x,y) & \text{BCH-cuts of } w(x,y) \\ 
		\hline\hline x^2y& [x^2y], [x^2][y], ([x][x])[y]  \\ 
		\hline  x(xy)& [x][xy], [x]([x][y]) \\ 
		\hline  x(yx)& [x]([y][x])  \\ 
		\hline (xy)x & [xy][x], ([x][y])[x] \\ 
		\hline \vdots & \vdots \\ 
		\hline 
	\end{array} 
\end{displaymath}
\begin{center}
	\includegraphics[scale=.5]{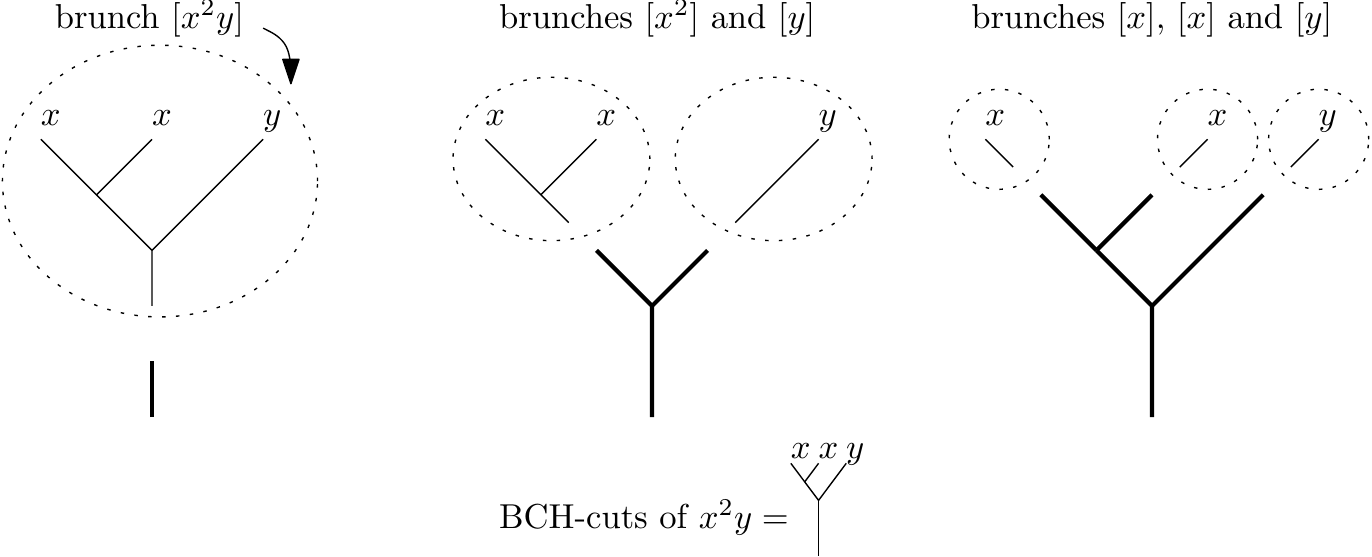}
\end{center}
Thus, for instance, the coefficient of $x(xy)$ in  $\log_l(\exp_l(x)\exp_l(y))$ is 
\begin{displaymath}
	c_{x^2} + c_{xx^2} = -\frac{1}{2}+\frac{1}{4}= -\frac{1}{4}
\end{displaymath}
while the coefficient of $x^2 y$ is
\begin{displaymath}
	\frac{c_x}{2} + \frac{c_{x^2}}{2}+ c_{x^2x} = \frac{1}{2}  -\frac{1}{4}+\frac{1}{12} = \frac{1}{3}.
\end{displaymath}
In a similar way, the coefficient of $(xy)(xy)$ is 
\begin{displaymath}
	c_{x^2} +c_{x^2x}+c_{xx^2}+c_{x^2x^2} = -\frac{5}{24}.
\end{displaymath}
Finally, let us compute the coefficient of $x^my^n$ with $m,n\geq 1$. 
The BCH-cuts are $[x^my^n]$ and $(([x^i][x])\cdots [x])(([y^j][y])\cdots[y])$ $i=1,\dots, m$ and $j=1,\dots, n$. 
Hence, the coefficient is
\begin{displaymath}
	\frac{c_x}{m!n!} + \sum_{i=1,j=1}^{m,n} \frac{c_{x^ix^j}}{(m-i+1)!(n-j+1)!}.
\end{displaymath}
However, we observe that the coefficients of the monomials in the expansion of $\log_l(\exp_l(x))$ agree with those in $\log_l(\exp_l(x)\exp_l(y))$ (take $y=0$). 
The coefficient in  $\log_l(\exp_l(x))$ of $x^m x^n$ ($m,n \geq 1$) is 
\begin{eqnarray*}
	 \sum_{i=1,j=1}^{m,n} \frac{c_{x^ix^j}}{(m-i+1)!(n-j+1)!} &\text{if}& n\geq 2 \quad \text{or}\\
	 \frac{c_x}{(m+1)!}+\sum_{i=1,j=1}^{m,n} \frac{c_{x^ix^j}}{(m-i+1)!(n-j+1)!} &\text{if}& n =1.
\end{eqnarray*}
However, $\log_l(\exp_l(x)) = x$ so this coefficient is $0$. 
Therefore, the coefficient of $x^my^n$ in $\log_l(\exp_l(x)\exp_l(y))$ is
\begin{displaymath}
	\frac{1}{m!n!} \quad (n \geq 2) \quad\quad \text{or}\quad\quad \frac{m}{(m+1)!} \quad (n =1)
\end{displaymath}
\qed
\end{example}

%
\begin{bibdiv}
\begin{biblist}

\bib{BlaCaOtRo09}{article}{
   author={Blanes, S.},
   author={Casas, F.},
   author={Oteo, J. A.},
   author={Ros, J.},
   title={The Magnus expansion and some of its applications},
   journal={Phys. Rep.},
   volume={470},
   date={2009},
   number={5-6},
   pages={151--238},
}

\bib{BonFul12}{book}{
   author={Bonfiglioli, A.},
   author={Fulci, R.},
   title={Topics in noncommutative algebra},
   series={Lecture Notes in Mathematics},
   volume={2034},
   note={The theorem of Campbell, Baker, Hausdorff and Dynkin},
   publisher={Springer, Heidelberg},
   date={2012},
   pages={xxii+539},
   isbn={978-3-642-22596-3},
}

\bib{ChaPa13}{article}{
   author={Chapoton, F.},
   author={Patras, F.},
   title={Enveloping algebras of preLie algebras, Solomon idempotents and
   the Magnus formula},
   journal={Internat. J. Algebra Comput.},
   volume={23},
   date={2013},
   number={4},
   pages={853--861},
}

\bib{DrGe04}{article}{
   author={Drensky, V.},
   author={Gerritzen, L.},
   title={Nonassociative exponential and logarithm},
   journal={J. Algebra},
   volume={272},
   date={2004},
   number={1},
   pages={311--320},
}

\bib{Dy47}{article}{
   author={Dynkin, E. B.},
   title={Calculation of the coefficients in the Campbell-Hausdorff formula},
   language={Russian},
   journal={Doklady Akad. Nauk SSSR (N.S.)},
   volume={57},
   date={1947},
   pages={323--326},
}

\bib{Fu00}{article}{
   author={Fuchs, P.},
   title={Bernoulli numbers and binary trees},
   note={Number theory (Liptovsk\'y J\'an, 1999)},
   journal={Tatra Mt. Math. Publ.},
   volume={20},
   date={2000},
   pages={111--117},
}

\bib{Ge04a}{article}{
   author={Gerritzen, L.},
   title={Planar rooted trees and non-associative exponential series},
   journal={Adv. in Appl. Math.},
   volume={33},
   date={2004},
   number={2},
   pages={342--365},
}

\bib{GeHo03}{article}{
   author={Gerritzen, L.},
   author={Holtkamp, R.},
   title={Hopf co-addition for free magma algebras and the non-associative
   Hausdorff series},
   journal={J. Algebra},
   volume={265},
   date={2003},
   number={1},
   pages={264--284},
}

\bib{MP10}{article}{
	title={Formal multiplications, bialgebras of distributions and nonassociative Lie theory},
	author={Mostovoy, J.},
	author={P{\'e}rez-Izquierdo, J. M.},
	journal={Transformation Groups},
	year={2010},
	volume={15},
	number={3},
	publisher={SP Birkhäuser Verlag Boston},
	pages={625-653},
}

\bib{MPS14}{article}{
	year={2014},
	journal={Bulletin of Mathematical Sciences},
	volume={4},
	number={1},
	title={Hopf algebras in non-associative Lie theory},
	publisher={Springer Basel},
	author={Mostovoy, J.},
	author={P{\'e}rez-Izquierdo, J. M.},
	author={Shestakov, I. P.},
	pages={129-173},
}

\bib{MPS14b}{article}{
	year={2014},
	journal={Journal of Algebra},
	volume={419},
	title={Nilpotent Sabinin algebras},
	author={Mostovoy, J.},
	author={P{\'e}rez-Izquierdo, J. M.},
	author={Shestakov, I. P.},
	pages={95-123},

}

\bib{SaMi87}{article}{
   author={Sabinin, L. V.},
   author={Mikheev, P. O.},
   title={Infinitesimal theory of local analytic loops},
   language={Russian},
   journal={Dokl. Akad. Nauk SSSR},
   volume={297},
   date={1987},
   number={4},
   pages={801--804},
   translation={
      journal={Soviet Math. Dokl.},
      volume={36},
      date={1988},
      number={3},
      pages={545--548},
   },
}

\bib{ShUm02}{article}{
   author={Shestakov, I. P.},
   author={Umirbaev, U. U.},
   title={Free Akivis algebras, primitive elements, and hyperalgebras},
   journal={J. Algebra},
   volume={250},
   date={2002},
   number={2},
   pages={533--548},
}

\bib{Sw69}{book}{
   author={Sweedler, M. E.},
   title={Hopf algebras},
   series={Mathematics Lecture Note Series},
   publisher={W. A. Benjamin, Inc., New York},
   date={1969},
   pages={vii+336},
}

\bib{Wi89}{article}{
   author={Wigner, D.},
   title={An identity in the free Lie algebra},
   journal={Proc. Amer. Math. Soc.},
   volume={106},
   date={1989},
   number={3},
   pages={639--640},
}

\bib{GW}{article}{
   author={Weingart, G.},
   title={On the axioms for Sabinin algebras},
   journal={J. Lie Theory},
  note={To appear}
}

\bib{Wo97}{article}{
   author={Woon, S. C.},
   title={A tree for generating Bernoulli numbers},
   journal={Math. Mag.},
   volume={70},
   date={1997},
   number={1},
   pages={51--56},
}
\end{biblist}
\end{bibdiv}

%
\end{document}